\documentclass[leqno]{amsart}
\usepackage{times}
\usepackage{amsfonts,amssymb,amsmath,amsgen,amsthm}
\usepackage{hyperref}

\theoremstyle{plain}
\newtheorem{theorem}{Theorem}[section]
\newtheorem{definition}[theorem]{Definition}

\newtheorem{lemma}[theorem]{Lemma}

\newtheorem{proposition}[theorem]{Proposition}

\theoremstyle{remark}
\newtheorem{remark}[theorem]{Remark}

\newtheorem{example}[theorem]{Example}

\def\dis
{\displaystyle}

\def\R{{\mathbf R}}
\def\N{{\mathbf N}}
\def\Sch{{\mathcal S}}
\def\O{\mathcal O}
\def\F{\mathcal F}

\def\({\left(}
\def\){\right)}
\def\<{\left\langle}
\def\>{\right\rangle}
\def\le{\leqslant}
\def\ge{\geqslant}

\def\Tend#1#2{\mathop{\longrightarrow}\limits_{#1\rightarrow#2}}

\def\d{{\partial}}

\def\si{{\sigma}}

\DeclareMathOperator{\RE}{Re}

\numberwithin{equation}{section}

\begin{document}

\title[NLS and frequency saturation]{Nonlinear
  Schr\"odinger equation and frequency saturation}    
\author[R. Carles]{R\'emi Carles}
\address{CNRS \& Univ. Montpellier~2\\ UMR5149\\ Math\'ematiques
\\CC051\\34095 Montpellier\\ France}
\email{Remi.Carles@math.cnrs.fr}

\begin{abstract}
We propose an approach that permits to avoid 
instability phenomena for the nonlinear Schr\"odinger equations. We
show that by approximating the solution in a suitable way, relying on
a frequency cut-off, global
well-posedness is obtained in any Sobolev space with nonnegative
regularity. The error between the exact solution and its approximation
can be measured according to the regularity of the exact solution,
with different accuracy according to the cases considered.
\end{abstract}
\keywords{Nonlinear Schr\"odinger equation; well-posedness;
  approximation}
\thanks{2010 \emph{Mathematics Subject Classification.} {Primary
    35Q55; Secondary 35A01, 35B30, 35B45, 35B65.} }
\thanks{This work was supported by the French ANR project
  R.A.S. (ANR-08-JCJC-0124-01).}
\maketitle

\section{Introduction}
\label{sec:intro}
We consider the nonlinear Schr\"odinger equation 
 \begin{equation}\label{eq:nls}
    i\d_t u+\Delta u= \epsilon|u|^{2\si} u,\quad
    (t,x)\in I\times\R^d  ;\quad u_{\mid t=0}=u_0,
  \end{equation}
for some time interval $I\ni 0$, with $\epsilon =1$ (defocusing case)
or $\epsilon=-1$ (focusing 
case). The aim of this paper is to propose an approach to overcome the
lack of local well-posedness in Sobolev spaces with nonnegative
regularity. 

Recall two important invariances associated to \eqref{eq:nls}:
\begin{itemize}
\item Scaling: if $u$ solves \eqref{eq:nls}, then for $\lambda>0$, so
  does $u_\lambda(t,x):=\lambda ^{1/ \si} u \( \lambda^2 t,
\lambda x \)$. This scaling leaves the $\dot H^{s_c}_x$-norm
invariant, with
$s_c = d/2-1/\si$.
\item Galilean: if $u$ solves \eqref{eq:nls}, then for
  $v\in \R^d$, so does $e^{iv\cdot x -i |v|^2
  t/2}u(t,x-vt)$. This transform leaves the $L^2_x$-norm invariant.
\end{itemize}
These two arguments suggest that the critical Sobolev regularity to
solve \eqref{eq:nls} is $\max(s_c,0)$. 
 Indeed,  if $s_c\ge 0$, local
well-posedness from $H^s(\R^d)$ to $H^s(\R^d)$ for $s\ge s_c$ has been
established in \cite{CW90}, and if $s_c<0$, local
well-posedness from $H^s(\R^d)$ to $H^s(\R^d)$ for $s\ge 0$ has been
established in \cite{TsutsumiL2}. 

If $s_c>0$, pathological phenomena have been exhibited for initial
data in $H^s(\R^d)$ with $0<s<s_c$: Gilles Lebeau has proved a ``norm
inflation'' phenomenon for the wave equation $\d_t^2 u -\Delta
    u +u^p=0$, $x\in 
  \R^3$, $p\in 2\N+1$, $p\ge 7$ (\cite{Lebeau01}; see also
  \cite{MetivierBourbaki}). The analogous result for \eqref{eq:nls}
  has been established in \cite{CCT2} and
  \cite{BGTENS}. 
\begin{theorem}[From \cite{BGTENS,CCT2}]\label{theo:instab}
Let $\si\ge 1$. Assume that $s_c=d/2-1/\si >0$, and let
$0<s<s_c$. 
There exists a family $(u_0^h)_{0<h \le 
  1}$ in ${\mathcal S}({\mathbb R}^d)$ with 
\begin{equation*}
  \|u_0^h\|_{H^{s}({\mathbb R}^d)} \to 0 \text{ as
  }h \to 0, 
\end{equation*}
a solution $u^h$ to \eqref{eq:nls} and $0<t^h \to 0$, such that: 
\begin{equation*}
  \|u^h(t^h)\|_{H^{s}({\mathbb R}^d)} \to +\infty \text{ as }h \to
 0.
\end{equation*}
\end{theorem}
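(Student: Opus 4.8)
The plan is to produce the instability via solutions that concentrate at a spatial scale $h$ and, on the pertinent time scale, are governed by the ordinary differential equation obtained from \eqref{eq:nls} by discarding $\Delta$; the large phase generated by this ODE is what drives the Sobolev norm up. Fix a non-vanishing $\phi\in\Sch(\R^d)$ (a Gaussian will do, which keeps $|\phi|^{2\si}$ smooth even when $\si\notin\N$), and seek
\[
  u^h(t,x)=a_h\,h^{-d/2}\,v^h\!\left(\frac t{\tau_h},\frac xh\right),\qquad a_h=h^\theta,
\]
with $s<\theta<s_c$ and $\tau_h$ chosen by balancing $\d_t u^h$ against the nonlinearity, namely $\tau_h=a_h^{-2\si}h^{\si d}$. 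Then $v^h$ solves the semiclassically scaled equation $i\d_\tau v^h+\mu_h\Delta v^h=\epsilon|v^h|^{2\si}v^h$, $v^h_{\mid\tau=0}=\phi$, with $\mu_h=\tau_h/h^2=a_h^{-2\si}h^{\si d-2}$. The hypothesis $s_c>0$ reads $\si d>2$, so $\mu_h\to0$ exactly when $a_h\gg h^{s_c}$; and since $s<s_c$ the window $\theta\in(s,s_c)$ is nonempty and simultaneously ensures $\mu_h\to0$, $\tau_h\to0$, and $\|u_0^h\|_{H^s}\sim a_h h^{-s}=h^{\theta-s}\to0$.

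Second, I would study the ODE profile $v(\tau,y)=\phi(y)\,e^{-i\epsilon\tau|\phi(y)|^{2\si}}$, which solves $i\d_\tau v=\epsilon|v|^{2\si}v$ exactly with $v_{\mid\tau=0}=\phi$. Two properties are needed: an upper bound $\|v(\tau)\|_{H^k}\lesssim\langle\tau\rangle^k$ for every $k$, obtained by differentiating the explicit formula; and the lower bound $\|v(\tau)\|_{\dot H^s}\gtrsim\tau^s$ as $\tau\to\infty$ for every $s>0$. The latter is immediate for integer $s$ by retaining the top-order term $\tau^s\,(\nabla|\phi|^{2\si})^{\otimes s}\phi$ in $\nabla^s v$; for fractional $s\in(0,1)$ it follows from the standard fact that $\|w\,e^{i\tau g}\|_{\dot H^s}\gtrsim\tau^s$ whenever $w,g$ are smooth and $\nabla g$ is bounded away from zero on a set where $w$ is bounded away from zero (use the Gagliardo seminorm, localized to such a region). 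By the scaling, $\|u^h(t)\|_{\dot H^s}=h^{\theta-s}\|v^h(t/\tau_h)\|_{\dot H^s}$, so the proof reduces to: (i) showing $v^h\approx v$ on a long time interval, and (ii) choosing $t^h\to0$ with $\tau^h:=t^h/\tau_h\to\infty$ fast enough that $h^{\theta-s}(\tau^h)^s\to\infty$.

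The crux — and the main obstacle — is (i): controlling $r^h:=v^h-v$, which solves $i\d_\tau r^h+\mu_h\Delta r^h=\epsilon(|v^h|^{2\si}v^h-|v|^{2\si}v)-\mu_h\Delta v$, $r^h_{\mid\tau=0}=0$, on an interval long enough to be useful. A naive $H^k$ energy estimate ($k>d/2$) plus Gr\"onwall only gives $\|r^h(\tau)\|_{H^k}\ll1$ for $\tau\lesssim(\log(1/\mu_h))^{1/(2\si k+1)}$, because $\|v(\tau)\|_{H^k}$ grows like a power of $\tau$; a poly-logarithmic window is hopeless against the polynomial smallness $h^{\theta-s}$ in (ii), and one checks that the argument cannot work unless this window is a genuine positive power of $1/h$. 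So the error analysis must be run through Strichartz estimates (or local smoothing) for the group $e^{i\mu_h\tau\Delta}$ — together with the conservation of mass and, modulo the benign $\mu_h$ in front of the kinetic term, of energy, to control $v^h$ itself — in a bootstrap that closes up to $\tau^h$ of size a positive power of $1/\mu_h$.

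Granting such a window, the endgame is arithmetic: pick $\theta$ close enough to $s$ (within the constraints imposed by the Strichartz exponents), set $\tau^h$ equal to that power of $1/\mu_h$, verify that $t^h=\tau_h\tau^h\to0$ while $h^{\theta-s}(\tau^h)^s\to\infty$, and conclude
\[
  \|u^h(t^h)\|_{H^s}\ \ge\ h^{\theta-s}\Big(c\,(\tau^h)^s-\|r^h(\tau^h)\|_{\dot H^s}\Big)\ \Tend h0\ +\infty,
  \qquad \|u_0^h\|_{H^s}\Tend h0 0 .
\]
The focusing and defocusing cases are treated identically: the sign $\epsilon$ is invisible to the leading-order ODE and enters only through a priori bounds on $v^h$, which are needed on the short time $\tau^h$ and not globally.
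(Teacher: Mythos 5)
Your architecture is the right one and matches the paper's sketch: concentration at scale $h$, rescaling to the small-dispersion equation $i\d_\tau v^h+\mu_h\Delta v^h=\epsilon|v^h|^{2\si}v^h$, the explicit ODE profile $v(\tau,y)=\phi(y)e^{-i\epsilon\tau|\phi(y)|^{2\si}}$ with $\|v(\tau)\|_{\dot H^s}\gtrsim\tau^s$, and the scaling identity $\|u^h(t)\|_{\dot H^s}=a_hh^{-s}\|v^h(t/\tau_h)\|_{\dot H^s}$. The genuine gap is in the step you yourself call the crux, and the repair you propose does not exist. Taking $a_h=h^\theta$ with $\theta>s$ strictly makes $\|u_0^h\|_{H^s}\sim h^{\theta-s}$ \emph{polynomially} small, so you need the ODE approximation to persist up to $\tau^h\gg h^{-(\theta-s)/s}$, a positive power of $1/h$. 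No such window is available, and Strichartz estimates do not create one: the obstruction is not a lack of space-time integrability of $v^h$ but the Gr\"onwall factor produced by linearizing the nonlinearity around $v$. Even the most favorable ($L^\infty$-based) estimate gives $\|v^h(\tau)-v(\tau)\|_{L^2}\lesssim\mu_h\langle\tau\rangle^{2}e^{C\tau}$ because $\|v(\tau)\|_{L^\infty}$ is merely bounded, so the window is at best $O(\log(1/\mu_h))$; the $H^k$ version needed to close the bootstrap on $\|v^h\|_{L^\infty}$ carries $\exp(C\tau^{2\si k+1})$ and yields only a power of $\log(1/h)$, as you noted. Strichartz for $e^{i\mu_h\tau\Delta}$ moreover costs a factor $\mu_h^{-1/p}$, the nonlinearity is $H^s$-supercritical so there is no long-time perturbation theory to invoke, and the profile $v$ is a non-dispersing bump whose frequency support grows linearly in $\tau$, so dispersion cannot absorb the source $\mu_h\Delta v$. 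Having correctly diagnosed that your parameter choice forces a polynomial window, the conclusion to draw is that the parameter choice must change, not that a polynomial window can be manufactured.

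The fix used in \cite{BGTENS,CCT2} --- and stated explicitly in the paper's sketch --- is to make the initial $H^s$ norm only \emph{logarithmically} small: take $u_0^h(x)=h^{s-d/2}(\log 1/h)^{-\alpha}a_0(x/h)$, i.e.\ $a_h=h^{s}(\log 1/h)^{-\alpha}$. Then $\|u_0^h\|_{H^s}\sim(\log 1/h)^{-\alpha}\to0$; one still has $\mu_h\sim h^{2\si(s_c-s)}(\log 1/h)^{2\si\alpha}\to0$ precisely because $s<s_c$; and the target $\|u^h(t^h)\|_{\dot H^s}\gtrsim(\log 1/h)^{-\alpha}(\tau^h)^{s}\to\infty$ now requires only $\tau^h\gg(\log 1/h)^{\alpha/s}$, which fits inside the logarithmic approximation window once $\alpha>0$ is chosen small enough relative to the exponent of $\log(1/\mu_h)$ that the energy estimate actually delivers. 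With that modification, the rest of your argument (the fractional lower bound on $\|v(\tau)\|_{\dot H^s}$ via the Gagliardo seminorm, the bookkeeping, and $t^h=\tau_h\tau^h\to0$ since $\tau_h\to0$ polynomially while $\tau^h$ is only logarithmic) goes through.
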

The argument of the proof consists in considering  concentrated
initial data, 
$$u_0(x) = 
h^{s-d/2}(\log 1/h)^{-\alpha} a_0\(\frac{x}{h}\)\quad\text{with }h\to
0,$$
and showing that for very 
short time, the Laplacian can be
neglected in \eqref{eq:nls}. The 
above result then stems from its (easy) counterpart in the ODE case,
by choosing a suitable $\alpha>0$.  
In the spirit of \cite{Lebeau05}, the above result has been  strengthened
to a ``loss of regularity'' in \cite{ACMA,CaARMA,ThomannAnalytic}; the
assumptions and conclusion are similar to that in
Theorem~\ref{theo:instab}, the only difference is that 
$u^h(t^h,\cdot)$ is measured in $H^k(\R^d)$ for any $k>s/(1+\si(s_c-s))$,
thus allowed to be smaller than $s$. In 
all the cases mentioned here, the lack of uniform continuity of the
nonlinear flow map near the origin is due to the appearance of higher
and higher
frequencies on a very short time scale. If $s_c<0$, similar
pathological phenomena have been established in $H^s(\R^d)$ with
$s<0$, where on the contrary, low frequencies are ignited; see
e.g. \cite{BeTa05,CDS-p,CCT2,KPV01}. In the rest of this paper, 
we focus on nonnegative regularity, $s\ge 0$. 
\smallbreak

The goal of this paper is twofold. First, we want to investigate a
method to remove the pathology mentioned above, causing a lack of
well-posedness for \eqref{eq:nls}, in a deterministic way, as opposed
to the probabilistic approach initiated in \cite{BT08a,BT08b} for the
wave equation. The
other motivation is related to numerical simulations for 
\eqref{eq:nls}, where high frequencies may be a source of important
errors; see for instance \cite{IgZu-p}, a reference which will be discussed
further into details in Sections~\ref{sec:smooth} and
\ref{sec:strichartz}. 
\smallbreak

We show that with a
suitable cut-off on the high frequencies of the nonlinearity, the
obstructions to local well-posedness vanish, and the problem becomes
globally well-posed: the nonlinear evolution of any initial datum in
$L^2(\R^d)$ can be
controlled \emph{a priori},  an information which may be useful for
numerics, since we do not have to decide if the initial datum belongs
to a full measure set or not. This strategy is validated inasmuch as this
procedure yields a good approximation of the solution to
\eqref{eq:nls} as the cut-off tends to the identity. Note that this
approach can be viewed as a deterministic counterpart of the one
presented in \cite{BTT-p} (see also
\cite{Bu11}). There, for the 
one-dimensional $L^2$-supercritical defocusing nonlinear Schr\"odinger
equation, the authors construct a Gibbs measure such that, among other
features, the pathological
phenomenon described in Theorem~\ref{theo:instab} occurs for a set of
initial data whose measure is zero: on the support of the Gibbs
measure, the Cauchy problem is globally well-posed, and a scattering
theory is available. Both points of view aim at
showing that norm inflation in the sense of Theorem~\ref{theo:instab}
is a rare phenomenon: in \cite{BTT-p}, the authors give a rigorous
meaning to this statement in an abstract way, while we are rather
interested in a recipe to avoid instabilities for sure, by a suitable
approximation of the equation, which can be used typically for
numerical simulations.
\smallbreak

Our choice of cutting off the high frequencies
  instead of, for instance, the values 
of the function itself is indeed motivated by numerics, where it is
standard to filter out high frequencies (sometimes without even saying
so). In an appendix, we discuss another approach, consisting in
saturating the values of the nonlinearity. One could of course combine both
approaches, frequency and physical saturations. 

\subsection*{Notations}

We define the Fourier transform by the formula
\begin{equation*}
  \widehat f(\xi)=\F(f)(\xi)=\frac{1}{(2\pi)^{d/2}}\int_{\R^d}e^{-ix\cdot
    \xi}f(x)dx,\quad f\in \Sch(\R^d). 
\end{equation*}
We write $a\lesssim b$ if there exists $C$ such that $a\le Cb$. In the
presence of a small parameter $h$, the notation indicates that $C$ is
independent of $h\in (0,1]$.

\section{From instability to global well-posedness}
\label{sec:exam}

Let $\chi:\R^d\to [0,1]$ be a smooth function,
equal to one on the unit ball, and even: $\chi(-x)=\chi(x)$ for all
$x\in \R^d$. It may be compactly supported, in the
Schwartz class $\Sch(\R^d)$, or with a slower decay at
infinity. For simplicity, we will not discuss sharp assumptions on
$\chi$. We define the frequency ``cut-off'' $\Pi$ as the Fourier
multiplier
\begin{equation*}
  \widehat{\Pi(f)}(\xi)=\chi(\xi)\widehat f(\xi).
\end{equation*}
As pointed out in the introduction, in the examples constructed to
prove the lack of local well-posedness, the mechanism of high
frequencies amplification occurs at the level of the ordinary
differential equation. We discuss some strategies to saturate high
frequencies at the ODE level first, with $\epsilon=1$ for simplicity. 

\subsection{Candidates at the ODE level}
\label{sec:ODE}

The first possibility to prevent the appearance of high frequencies by
nonlinear self-interaction consists in saturating the whole nonlinearity:
\begin{equation}\label{eq:I}
  i\d_t v = \Pi\(|v|^{2\si}v\).
\end{equation}
This can be viewed as an extremely simplified version  of the
$I$-method (see e.g. \cite{CKSTT02}). Another choice consists in
saturating the high frequencies in the ``nonlinear multiplicative
potential'' only, that is $|v|^{2\si}$: for $\si\in \N$, we propose
two possibilities,
\begin{align}
  \label{eq:2}
&  i\d_t v = \Pi\(|v|^{2\si}\)v,\\
  \label{eq:3}
&  i\d_t v = \(\Pi\(|v|^{2}\)\)^{\si}v.
\end{align}
In the cubic case $\si=1$, the last two approaches obviously
coincide. These two approaches have two advantages over \eqref{eq:I}:
\begin{itemize}
\item They preserve the gauge invariance. If $v$ solves the equation,
  then so does $v e^{i\theta}$ for any constant $\theta\in \R$. 
\item They preserve the conservation of mass.
\end{itemize}
To see the second point, rewrite $\Pi(f)=K\ast f$, with 
$K(x)=(2\pi)^{-d/2}\widehat \chi(-x)$. Since $\chi$ is even and
real-valued, so is $K$, and therefore $\d_t |v|^2=0$ in \eqref{eq:2}
and \eqref{eq:3}. This identity leads to the conservation of the
$L^2$-norm at the PDE level. 
\smallbreak

Before passing to the PDE case, we conclude this section by showing
that even at the ODE level, cutting off high frequencies in the
initial data does not suffice to prevent the appearance of higher
frequencies in the solution for positive time. For $a\in
\Sch(\R^d)$ and $s>0$, consider $v^h$ the solution to
\begin{equation*}
  i\d_t v^h = |v^h|^{2\si}v^h;\quad v^h(0,x) = h^{s-d/2}a\(\frac{x}{h}\).
\end{equation*}
Then $v^h_{\mid t=0}$ is bounded in $H^s(\R^d)$, uniformly in
$h\in(0,1]$, and if $\widehat a$ is compactly supported (in $B(0,R)$), then
$\widehat{v^h}_{\mid t=0}$  is compactly supported (in
$B(0,R/h)$). Since $\d_t|v^h|^2=0$, we have the explicit formula
\begin{equation*}
  v^h(t,x) = h^{s-d/2}a\(\frac{x}{h}\)\exp\(-it
  h^{2\si(s-d/2)}\left\lvert a\(\frac{x}{h}\)\right\rvert^{2\si}\). 
\end{equation*}
We check that for $t>0$, as $h\to 0$, the homogeneous Sobolev norms behave like
\begin{equation*}
  \|v^h(t)\|_{\dot H^k}\approx h^{s -2k\si (s-d/2) -k}t^k,
\end{equation*}
at least for $k\in \N$. The above quantity is unbounded as $h\to 0$ if
\begin{equation*}
  k>\frac{s}{1+2\si(s-d/2)}. 
\end{equation*}
Therefore, if $s<d/2$, $v^h(t,\cdot)$ is unbounded in $H^s(\R^d)$ for
$t>0$, as $h\to 
0$: cutting off the high frequencies in the initial data does not
suffice to control the frequency support of the solution. On the other hand,
the models \eqref{eq:2} and \eqref{eq:3} prevent the appearance of
high frequencies by nonlinear self-interaction. The above mechanism is
essentially the one that leads to the norm inflation phenomenon in
\cite{BGTENS,CCT2,Lebeau01}, except that in those papers, the
approximation by an ODE is used only on a time interval where the
$H^s$-norm becomes unbounded, but not the $H^k$-norm for any
$k<s$. The above mechanism at the PDE level leads to the loss of
regularity \cite{ACMA,CaARMA,Lebeau05,ThomannAnalytic}, where indeed
$k$ is allowed to be smaller than $s$, as recalled in the
introduction. Roughly speaking, the appearance of oscillations is
quite similar to the above ODE example; in the PDE case, the
numerology is different, and the proof is more intricate.  

\subsection{Choice at the PDE level}
\label{sec:PDE}

We consider now the equations
\begin{equation}\label{eq:edp1}
  i\d_t u + P(D)u = \epsilon \Pi\(|u|^{2\si}\)u,
\end{equation}
and
\begin{equation}
  \label{eq:edp2}
 i\d_t u + P(D)u = \epsilon \(\Pi\(|u|^{2}\)\)^\si u, 
\end{equation}
where $P(D)$ is a Fourier multiplier with a real-valued symbol
$P:\R^d\to \R$,
\begin{equation*}
  \widehat{P(D)f}=P(\xi)\widehat f(\xi). 
\end{equation*}
The $L^2$-norm of $u$ is formally independent of time:
\begin{equation}\label{eq:L2}
  \frac{d}{dt}\int_{\R^d}|u(t,x)|^2dx=0. 
\end{equation}
In view of this conservation and of Young inequality
\begin{equation}\label{eq:young}
\|\Pi(f)\|_{L^\infty}\le \|K\|_{L^\infty}\|f\|_{L^1},
\end{equation}
the option \eqref{eq:edp2} seems more interesting than
\eqref{eq:edp1}, and we have the following result.
\begin{theorem}\label{theo:gwp}
  Let $\si\in \N$, $\epsilon\in \{\pm 1\}$, $P:\R^d\to \R$ and
  $\chi\in \Sch(\R^d)$
  even and real-valued. 
  \begin{itemize}
  \item For any $u_0\in L^2(\R^d)$, \eqref{eq:edp2} has a unique
    solution $u\in C(\R;L^2(\R^d))$ such that $u_{\mid t=0}=u_0$. Its
    $L^2$-norm is independent of time: \eqref{eq:L2} holds. 
\item If in addition $u_0\in H^s(\R^d)$, $s\in \N$, then $u\in
  C(\R;H^s(\R^d))$.
\item The flow map $u_0\mapsto u$ is uniformly continuous from the
  balls in $L^2(\R^d)$ to $C(\R;L^2(\R^d))$. More precisely, for all
  $u_0,v_0\in L^2(\R^d)$, there exists $C$ depending on $\si$,
  $\|K\|_{L^\infty}$, 
  $\|u_0\|_{L^2}$ and $\|v_0\|_{L^2}$ such that for all $T>0$, 
  \begin{equation}\label{eq:continuity}
    \|u-v\|_{L^\infty([-T,T];L^2(\R^d))}\le \|u_0-v_0\|_{L^2(\R^d)}e^{CT},
  \end{equation}
where $u$ and $v$ denote the solutions to \eqref{eq:edp2} with initial
data $u_0$ and $v_0$, respectively. 
\item More generally, let $s\in \N$. For all
  $u_0,v_0\in H^s(\R^d)$, there exists $C$ depending on $\si$,
  $\|K\|_{W^{s,\infty}}$, 
  $\|u_0\|_{H^s}$ and $\|v_0\|_{H^s}$ such that for all $T>0$, 
  \begin{equation}\label{eq:continuity2}
    \|u-v\|_{L^\infty([-T,T];H^s(\R^d))}\le \|u_0-v_0\|_{H^s(\R^d)}e^{CT}.
  \end{equation}
  \end{itemize}
\end{theorem}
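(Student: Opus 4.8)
The plan is to treat \eqref{eq:edp2} as a semilinear equation whose nonlinearity, thanks to the cut-off $\Pi$, is essentially Lipschitz on bounded sets of $L^2$, and to run a fixed-point argument in $C([-T,T];L^2(\R^d))$ using Duhamel's formula $u(t) = e^{itP(D)}u_0 - i\epsilon\int_0^t e^{i(t-s)P(D)}\bigl(\Pi(|u|^2)\bigr)^\si u(s)\,ds$. The key structural observation is that $\Pi(|v|^2) = K\ast|v|^2$, so by Young's inequality \eqref{eq:young}, $\|\Pi(|v|^2)\|_{L^\infty}\le \|K\|_{L^\infty}\|v\|_{L^2}^2$; hence $(\Pi(|v|^2))^\si v$ maps $L^2$ to $L^2$ with $\|(\Pi(|v|^2))^\si v\|_{L^2}\le \|K\|_{L^\infty}^\si\|v\|_{L^2}^{2\si+1}$. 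First I would establish local existence on a time interval whose length depends only on $\|u_0\|_{L^2}$ (and $\|K\|_{L^\infty}$, $\si$), by showing the Duhamel map is a contraction on a ball in $C([-T,T];L^2)$; the difference estimate uses that $a\mapsto(\Pi a)^\si$ applied to $|v|^2$ versus $|w|^2$, combined with the algebraic identity $(\Pi(|v|^2))^\si v - (\Pi(|w|^2))^\si w$ telescoped into $\Pi(|v|^2-|w|^2)$ factors plus one factor $v-w$, and then $\bigl\| |v|^2-|w|^2\bigr\|_{L^1}\le (\|v\|_{L^2}+\|w\|_{L^2})\|v-w\|_{L^2}$ by Cauchy--Schwarz. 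So everything closes in $L^2$ alone, with no Strichartz input needed, because the smoothing is supplied by $\Pi$.

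Next I would upgrade local to global: since the $L^2$-norm is conserved — which follows formally from \eqref{eq:L2}, and can be justified rigorously once the solution is known to be in $C([-T,T];L^2)$ because $\partial_t|u|^2 = -2\,\IM\bigl(\bar u\,\epsilon(\Pi(|u|^2))^\si u\bigr) = 0$ pointwise since $(\Pi(|u|^2))^\si$ is real-valued (here $K$ real, using that $\chi$ is even and real) — the local existence time does not shrink, and iterating gives a solution on all of $\R$. Uniqueness on each interval is immediate from the same difference estimate plus Gronwall, which in fact yields \eqref{eq:continuity}: writing $w=u-v$ and applying the $L^2$ Duhamel difference bound with the a priori control $\|u(t)\|_{L^2}=\|u_0\|_{L^2}$, $\|v(t)\|_{L^2}=\|v_0\|_{L^2}$, one gets $\|w(t)\|_{L^2}\le \|w(0)\|_{L^2} + C\int_0^{|t|}\|w(s)\|_{L^2}\,ds$ with $C=C(\si,\|K\|_{L^\infty},\|u_0\|_{L^2},\|v_0\|_{L^2})$, and Gronwall finishes it.

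For the $H^s$ statements with $s\in\N$, I would differentiate the equation $s$ times and estimate in $L^2$. Applying $\partial^\alpha$, $|\alpha|\le s$, to $(\Pi(|u|^2))^\si u$ and using the Leibniz rule, the dangerous term is the one where all derivatives hit $u$, which is harmless (it is linear in $\partial^\alpha u$ with coefficient $(\Pi(|u|^2))^\si\in L^\infty$), while every term where at least one derivative hits a $\Pi$-factor is controlled because $\partial^\beta\Pi(|u|^2) = (\partial^\beta K)\ast|u|^2$ is bounded in $L^\infty$ by $\|\partial^\beta K\|_{L^\infty}\|u\|_{L^2}^2\le \|K\|_{W^{s,\infty}}\|u\|_{L^2}^2$ — again Young's inequality, now on derivatives of $K$. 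Thus $\|u(t)\|_{H^s}$ satisfies a Gronwall inequality with constants depending on $\si$, $\|K\|_{W^{s,\infty}}$ and the conserved $\|u_0\|_{L^2}$, giving persistence of $H^s$ regularity and global bounds; propagation of the regularity itself (continuity in $H^s$, not merely a bound) follows by the standard approximation of $u_0$ by smooth data together with the $H^s$ difference estimate. That difference estimate — the commutator/Leibniz bookkeeping showing that $\partial^\alpha\bigl[(\Pi(|u|^2))^\si u - (\Pi(|v|^2))^\si v\bigr]$ is bounded in $L^2$ by $C(\|u\|_{H^s},\|v\|_{H^s},\|K\|_{W^{s,\infty}})\|u-v\|_{H^s}$, using that $|u|^2-|v|^2$ and its derivatives land in $L^1$ by products of $L^2$ functions — is \eqref{eq:continuity2} after Gronwall.

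The main obstacle, such as it is, is purely organizational: keeping the dependence of constants exactly as stated (on $\|K\|_{L^\infty}$ resp. $\|K\|_{W^{s,\infty}}$, on $\si$, and on the initial $L^2$/$H^s$ norms, but \emph{not} on $P$ or on $T$ except through the stated $e^{CT}$), which forces one to always route the estimates through $\|\Pi(\cdot)\|_{L^\infty}$-type bounds (unitarity of $e^{itP(D)}$ on every $H^s$ makes $P$ disappear) rather than through any space-time norm that would reintroduce $T$- or $P$-dependence. There is no genuine analytic difficulty because the frequency cut-off converts the quasilinear-looking nonlinearity into a smooth, $L^2$-locally-Lipschitz map; the instability of Theorem~\ref{theo:instab} is precisely what is being traded away.
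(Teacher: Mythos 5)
Your proposal is correct and follows essentially the same route as the paper: a fixed point for Duhamel's formula closed purely in $C([-T,T];L^2)$ via the Young bound $\|K\ast|u|^2\|_{L^\infty}\le\|K\|_{L^\infty}\|u\|_{L^2}^2$, globalization by $L^2$-conservation, Leibniz plus $\|\d^\beta K\|_{L^\infty}$ bounds for $H^s$ persistence, and Gronwall for \eqref{eq:continuity} and \eqref{eq:continuity2}. (One small slip worth fixing: $\d_t|u|^2$ is not pointwise zero for the PDE, since the term $2\RE\(i\bar u\,P(D)u\)$ vanishes only after integration in $x$ using that $P$ is real-valued; the paper defers this standard justification to the literature.)
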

\begin{remark}
  As pointed out in \cite{CFH11}, even if the solution is constructed
  by a fixed point argument, the continuity of the flow map is not
  trivial in general. In the case of Schr\"odinger
equations \eqref{eq:nls}, continuity of the flow map in
$H^s(\R^d)$  is known only in a limited
number of cases:
see \cite{TsutsumiL2} for $s=0$, \cite{Kato87} for $s=1$ and $s=2$, and
\cite{CFH11} for $0<s<1$.
 \end{remark}
\begin{proof}
First, recall that $S(t)= e^{-itP(D)}$ is a unitary group on $\dot
H^s(\R^d)$, $s\in \R$. Duhamel's formula associated to \eqref{eq:edp2}
reads
\begin{equation}\label{eq:duhamel}
  u(t)=S(t)u_0-i\epsilon \int_0^t S(t-\tau)\(\(K\ast |u|^2\)^\si
  u\)(\tau)d\tau.
\end{equation}
  The local existence in $L^2$ stems from a standard fixed point
  argument in
  \begin{equation*}
    X(T) = \{u \in C([-T,T];L^2(\R^d));\quad
    \|u\|_{L^\infty([-T,T];L^2)}\le 2\|u_0\|_{L^2}\}. 
  \end{equation*}
Denote by $\Phi(u)(t)$ the right hand side of \eqref{eq:duhamel}. In
view of \eqref{eq:young}, for $t\in [-T,T]$,
\begin{align*}
  \|\Phi(u)(t)\|_{L^2}&\le \|u_0\|_{L^2} +\int_{-T}^T \left\|\(\(K\ast |u|^2\)^\si
  u\)(\tau)\right\|_{L^2}d\tau \\
&\le \|u_0\|_{L^2}+\int_{-T}^T  \left\|K\ast |u(\tau)|^2\right\|_{L^\infty}^\si
  \|u(\tau)\|_{L^2}d\tau \\
&\le \|u_0\|_{L^2}+\|K\|_{L^\infty}^\si \int_{-T}^T  \|u(\tau)\|_{L^2}^{2\si+1}
  d\tau .
\end{align*}
By choosing $T>0$ sufficiently small, we see that $X(T)$ is stable
under the action of $\Phi$. Note that in the case of the model
\eqref{eq:edp1}, the above estimate would have to be adapted, forcing
us to work in a space smaller than $X(T)$ ($L^2$ regularity in space
would no longer be sufficient in general). Contraction is established
in the same way:
\begin{align*}
  \|\Phi(u)(t)-\Phi(v)(t)\|_{L^2} & \le \int_{-T}^T \left\|\(\(K\ast |u|^2\)^\si
  u\)(\tau)- \(\(K\ast |v|^2\)^\si
  v\)(\tau)\right\|_{L^2}d\tau \\
& \le \int_{-T}^T \left\|\(\(K\ast |u|^2\)^\si- \(K\ast
  |v|^2\)^\si\)u\right\|_{L^2}d\tau\\
&\quad +\int_{-T}^T \left\|\(\(K\ast |v|^2\)^\si\)(u-v)\right\|_{L^2}d\tau .
\end{align*}
Using the estimate $\lvert  a^\si-
b^\si\rvert\lesssim (|a|^{\si-1}+|b|^{\si-1})|a-b|$, and
\eqref{eq:young} again, we infer
\begin{align*}
  \|\Phi(u)(t)-\Phi(v)(t)\|_{L^2} & \lesssim \|K\|_{L^\infty}^\si
\int_{-T}^T\( \|u\|_{L^2}^{2\si-1}
+\|v\|_{L^2}^{2\si-1}\)\|u-v\|_{L^2}\|u\|_{L^2}d\tau \\ 
&\quad +\|K\|_{L^\infty}^\si
\int_{-T}^T \|v\|_{L^2}^{2\si}\|u-v\|_{L^2}d\tau ,
\end{align*}
where all the functions inside the integrals are implicitly evaluated
at time $\tau$. Choosing $T>0$ possibly smaller, $\Phi$ is a
contraction on $X(T)$. Note that this small time $T$ depends only on
$\si$, $\|K\|_{L^\infty}$ and $\|u_0\|_{L^2}$. Since the $L^2$-norm of
$u$ is preserved (see e.g. \cite{CazCourant} for the rigorous
justification), the construction of a local solution can be repeated
indefinitely, hence global existence and uniqueness at the $L^2$
level.
\smallbreak

Global existence in $H^s(\R^d)$ for $s\in \N$ then follows easily,
thanks to the estimate
\begin{equation*}
 \left\|\(K\ast |u|^2\)^\si
  u\right\|_{H^s}\lesssim
\sum_{|\alpha|+|\beta|=s}\left\|\d^\alpha\(K\ast |u|^2\)^\si 
  \d^\beta u\right\|_{L^2}\lesssim \|K\|_{W^{s,\infty}}^\si
\|u\|_{L^2}^\si \|u\|_{H^s}. 
\end{equation*}
The continuity of the flow map in $L^2$ is obtained by resuming the
estimate written to establish the contraction of $\Phi$: for $t>0$,
\begin{align*}
  \|u(t)-v(t)\|_{L^2}&\le \|u_0-v_0\|_{L^2} + \|K\|_{L^\infty}^\si
\int_0^t
\(\|u\|_{L^2}^{2\si}+\|v\|_{L^2}^{2\si}\)\|u-v\|_{L^2}d\tau\\
&\le \|u_0-v_0\|_{L^2} +
\|K\|_{L^\infty}^\si\(\|u_0\|_{L^2}^{2\si}+\|v_0\|_{L^2}^{2\si}\) 
\int_0^t\|u-v\|_{L^2}d\tau,
\end{align*}
where we have used the conservation of the $L^2$-norm. Proceeding
similarly for $t<0$, Gronwall lemma
then yields \eqref{eq:continuity} for $C$ depending only of $\si$,
$\|K\|_{L^\infty}$, $\|u_0\|_{L^2}$ and $\|v_0\|_{L^2}$. Finally,
\eqref{eq:continuity2} is obtained in a similar fashion.
\end{proof}
\begin{remark}
  The proof of continuity of the flow map is extremely easy.
  This is
  in sharp constrast with the case of the equation without frequency
  cut-off. In the case of Schr\"odinger
equations ($P(\xi)=-|\xi|^2$), continuity is more intricate to
establish (see \cite{TsutsumiL2}), and is true only for
$L^2$-subcritical nonlinearities, $\si\le 2/d$, from \cite{CCT2}. 
\end{remark}
We note that even for large $\si$, global well-posedness in $L^2$ is
available, in sharp contrast with the nonlinear Schr\"odinger equation
\eqref{eq:nls}. Even in the focusing case $\epsilon=-1$, the high
frequency cut-off prevents finite time blow-up. In
\eqref{eq:continuity2}, consider $v_0=v=0$ and $s=1$ for instance: by
comparison with the case of \eqref{eq:nls}, we see that the constant
$C$ necessarily depends on $K$ (or equivalently on $\chi$), and is
unbounded as $\chi$ converges to the Dirac mass. The frequency
cut-off $\Pi$ removes the instabilities, and prevents finite time
blow-up.

\begin{remark}[Hamiltonian structure in the cubic case]
 If $\si=1$, \eqref{eq:edp1} and \eqref{eq:edp2} coincide. 
We have the equivalence
\begin{equation*}
  \chi \text{ 
even and real-valued }\Longleftrightarrow K \text{ 
even and real-valued.}
\end{equation*}
This implies that under the assumption of Theorem~\ref{theo:gwp},
\eqref{eq:edp2} has 
an Hamiltonian structure, and the conserved energy is
\begin{equation*}
  H(u) = \int_{\R^d}\overline u(x) P(D)u(x)dx +
    \frac{\epsilon}{2} 
    \iint K(x-y)|u(y)|^2|u(x)|^2 dxdy.
\end{equation*} 
\end{remark}

\section{Convergence in the smooth case}
\label{sec:smooth}

Suppose that $P(D)$ converges to $\Delta$ and that $\Pi$  converges to
${\rm Id}$: does the
 solution to \eqref{eq:edp2} converge  to the solution of NLS? We show that
this is the case under suitable assumptions on these convergences, at
least in the case where the solution to the limiting equation
\eqref{eq:nls} is very smooth. In the sequel, the convergence is
indexed by $h\in (0,1]$. 
\begin{proposition}\label{prop:CVsmooth}
 Let $\si\in \N$. We assume that $P$ and $\Pi$ verify the following
 properties: 
\begin{itemize}
\item There exist $\alpha,\beta\ge 0$ such that $P_h(\xi) = -|\xi|^2 +
    \O\(h^\alpha\<\xi\>^\beta\)$.
\item $\chi_h(\xi)=\chi\(h\xi\)$, with $\chi\in
    \Sch(\R^d;[0,1])$ even, 
  real-valued, $\chi=1$  on the unit ball. 
\end{itemize}
Denote by $u^h$ the solution to \eqref{eq:edp2} with
 $P_h$ and $\chi_h$, such that $u^h_{\mid t=0}=u_{\mid t=0}$. Suppose
 that the solution to \eqref{eq:nls} satisfies $u \in
 L^\infty([0,T];H^{s+\beta})$, for some  
 $s>d/2$. Then
  \begin{equation*}
     \|u-u^h\|_{L^\infty([0,T];H^{s})}\lesssim h^{\min(\alpha,\beta)}.
  \end{equation*}
\end{proposition}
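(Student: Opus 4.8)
The strategy is a Gronwall-type comparison between the two Duhamel formulas, carried out in $H^s$ with $s>d/2$ so that $H^s$ is an algebra and controls $L^\infty$. Write $w^h=u-u^h$. Subtracting the Duhamel formula for \eqref{eq:nls} from the one for \eqref{eq:edp2} with $(P_h,\chi_h)$, I would split the error into three contributions: (i) the discrepancy between the free groups $e^{it\Delta}$ and $S_h(t)=e^{-itP_h(D)}$ applied to the data and, more importantly, inside the Duhamel integral; (ii) the discrepancy between the true nonlinearity $|u|^{2\sigma}u$ and the frequency-saturated one $(\Pi_h(|u|^2))^\sigma u$ evaluated \emph{at the same function} $u$; and (iii) the genuine difference of the saturated nonlinearities evaluated at $u$ versus at $u^h$, which is the term producing the Gronwall factor.

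For term (iii), since $\chi_h$ is bounded by $1$, the operator $\Pi_h$ is bounded on every $H^\sigma$ uniformly in $h$, and combined with the algebra property and the a priori bound on $\|u^h\|_{L^\infty H^s}$ (which follows from Theorem~\ref{theo:gwp} together with the uniform-in-$h$ bound on $\|u\|_{L^\infty H^s}$, noting $\|K_h\|_{W^{s,\infty}}$ stays bounded because $\chi_h(\xi)=\chi(h\xi)$ has shrinking support — here one should be slightly careful, but in the smooth regime the clean way is to propagate a bound $\|u^h\|_{L^\infty H^s}\le 2\|u\|_{L^\infty H^s}$ on a time interval and then bootstrap), one gets $\|(\text{iii})(t)\|_{H^s}\lesssim \int_0^t \|w^h(\tau)\|_{H^s}\,d\tau$ with constant depending only on $\sigma$, $T$, and $\|u\|_{L^\infty H^s}$. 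For term (ii), I would estimate $\||u|^{2\sigma}u-(\Pi_h(|u|^2))^\sigma u\|_{H^s}$ using $|a^\sigma-b^\sigma|\lesssim(|a|^{\sigma-1}+|b|^{\sigma-1})|a-b|$ together with $\|(\mathrm{Id}-\Pi_h)(|u|^2)\|_{H^s}\lesssim h^\beta\|\,|u|^2\|_{H^{s+\beta}}\lesssim h^\beta\|u\|_{H^{s+\beta}}^2$, which uses $1-\chi(h\xi)\lesssim h^\beta\langle\xi\rangle^\beta$ on the support where it is nonzero — this is where the $H^{s+\beta}$ hypothesis on $u$ enters and produces the $h^\beta$ gain.

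For term (i), the free-group discrepancy, the standard device is to write $S_h(t)-e^{it\Delta}=\int_0^t \frac{d}{d\tau}\big(S_h(\tau)e^{i(t-\tau)\Delta}\big)\,d\tau = i\int_0^t S_h(\tau)\big(P_h(D)+\Delta\big)e^{i(t-\tau)\Delta}\,d\tau$, so that $(S_h(t)-e^{it\Delta})f$ costs $\|(P_h(D)+\Delta)f\|\lesssim h^\alpha\|f\|_{H^\beta}$ in the appropriate norm. Applied to $f=u_0$ and, integrated in the Duhamel term, to $f$ equal to values of the smooth nonlinearity, this yields a $O(h^\alpha)$ contribution in $L^\infty([0,T];H^s)$, again absorbing $\beta$ derivatives into the $H^{s+\beta}$ regularity of $u$ (and of its nonlinearity, by the algebra property). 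Collecting the three bounds gives
\begin{equation*}
  \|w^h(t)\|_{H^s}\lesssim h^{\min(\alpha,\beta)} + \int_0^t \|w^h(\tau)\|_{H^s}\,d\tau,
\end{equation*}
and Gronwall's lemma closes the estimate. The main obstacle I anticipate is the bookkeeping for term (i): one must make sure that every time a $\beta$-order derivative is generated by the $P_h+\Delta$ error it lands on a quantity known to be in $H^{s+\beta}$ — which for the Duhamel integrand means controlling $\|(|u|^{2\sigma}u)(\tau)\|_{H^{s+\beta}}$, fine by the algebra property since $u\in L^\infty H^{s+\beta}$, but it does mean the hypothesis is used in an essential, not cosmetic, way; a secondary technical point is justifying the uniform-in-$h$ a priori bound on $\|u^h\|_{L^\infty H^s}$ via a continuity/bootstrap argument rather than directly from Theorem~\ref{theo:gwp}, whose constant a priori degenerates as $\chi_h\to 1$.
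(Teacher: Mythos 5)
Your proposal is correct and follows essentially the same route as the paper: the same three-way splitting of the error, the same two key estimates $\|(P_h(D)+\Delta)f\|_{H^s}\lesssim h^\alpha\|f\|_{H^{s+\beta}}$ and $\|(1-\Pi_h)f\|_{H^s}\lesssim h^\beta\|f\|_{H^{s+\beta}}$, the algebra property of $H^s$ for $s>d/2$ with the uniform-in-$h$ boundedness of $\Pi_h$, and the same bootstrap-plus-Gronwall closure. The only (immaterial) difference is that the paper keeps $P_h(D)$ as the linear part of the equation for $w^h=u-u^h$, so the dispersion error enters only as the source term $(P_h(D)-\Delta)u$ handled by unitarity of $e^{-itP_h(D)}$ on $H^s$, which sidesteps your semigroup-comparison identity for term (i).
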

\begin{example} The above assumption on $P_h$ is
  satisfied with $\alpha=1$ and $\beta=2$ in the following cases:
  \begin{itemize}
  \item $P_h(\xi) = \dis \frac{-|\xi|^2}{1+h|\xi|^2}$.
\item $P_h(\xi) =\dis-\frac{1}{h}\arctan\(h|\xi|^2\)$.
  \end{itemize}
The second example is borrowed from \cite{DeFa09}, where this
truncated operator appears naturally when discretizing the Laplacian
for numerical schemes. 
\end{example}
\begin{remark}
  In this result, no assumption is needed on the possible decay
  of $\chi$ at infinity. 
\end{remark}
\begin{proof}
   Let $w^h = u-u^h$:  it satisfies $w^h_{\mid t=0}=0$ and 
\begin{align*}
  i\d_t w^h + P_h(D)w^h & =
\epsilon \(\Pi_h\(|u|^2\)\)^\si u - \epsilon\(\Pi_h\(|u^h|^2\)\)^\si u^h \\
&\quad+
\(P_h(D)-\Delta\)u + \epsilon \(|u|^{2\si}-\(\Pi_h\(|u|^2\)\)^\si \)u,
\end{align*}
where we have denoted by $\Pi_h$ the Fourier
  multiplier of symbol $\chi_h$. 
Denote by $R^h(u)$ the second line, which corresponds to a source
term. In view of the assumption on $P_h$, there exists $C$ independent
of $h\in (0,1]$ such that
\begin{equation*}
  \| P_h(D)f-\Delta f \|_{H^s}\le C h^\alpha
      \|f\|_{H^{s+\beta}}\quad \forall f\in H^{s+\beta}(\R^d). 
\end{equation*}
We also have, by Plancherel formula,
\begin{align*}
  \|\(1-\Pi_h\)f\|_{H^s}^2 & = \int_{\R^d}
  \(1-\chi(h\xi)\)^2\<\xi\>^{2s}|\widehat f(\xi)|^2d\xi\\
&\le \int_{|\xi|>1/h}\<\xi\>^{2s}|\widehat f(\xi)|^2d\xi\\
&\le h^{2\beta}\int_{|\xi|>1/h}\<\xi\>^{2s+2\beta}|\widehat
f(\xi)|^2d\xi\le h^{2\beta}\|f\|_{H^{s+\beta}}^2.
\end{align*}
Therefore,
\begin{equation*}
  \|R^h(u)\|_{L^\infty([0,T];H^s)}\lesssim h^{\min
    (\alpha,\beta)}\|u\|_{L^\infty([0,T];H^{s+\beta})} .
\end{equation*}
Now since $s>d/2$, $H^{s}(\R^d)$ is an algebra, and there exists $C$
independent of $h$ such that
\begin{equation*}
  \left\| \(\Pi_h\(|u|^2\)\)^\si u - \(\Pi_h\(|u^h|^2\)\)^\si
    u^h \right\|_{H^s}\le C \|\widehat \chi\|_{L^1}^\si \(\|u\|_{H^s}^{2\si}+
  \|u^h\|_{H^s}^{2\si}\)\|u-u^h\|_{H^s},
\end{equation*}
where the Young inequality that we have used is not the same as in
Section~\ref{sec:exam}:
\begin{equation*}
  \|K\ast f\|_{L^2}\le \|K\|_{L^1}\|f\|_{L^2}. 
\end{equation*}
This is essentially the only way to obtain an estimate independent of
$h\in (0,1]$. Indeed, $\Pi_h(f)=K_h\ast f$, with 
\begin{equation*}
  K_h(x)=\frac{1}{(2\pi)^{d/2} h^d}\widehat \chi\(\frac{-x}{h}\). 
\end{equation*}
The result then stems from a bootstrap argument: so long as
$$\|u^h\|_{L^\infty([0,t]; H^s)}\le 1+ \|u\|_{L^\infty([0,T]; H^s)},$$
Gronwall lemma yields
\begin{equation*}
\|u-u^h\|_{L^\infty([0,t];H^s)}  \lesssim h^{\min
    (\alpha,\beta)}\|u\|_{L^\infty([0,T];H^{s+\beta})}.
\end{equation*}
Therefore, up to choosing $h$ sufficiently small, this estimate is
valid up to $t=T$. 
\end{proof}
Such a convergence result can be compared to the one proved in
\cite{IgZu-p} to prove the convergence of numerical
approximations. The approach there is a bit different though, inasmuch
as the frequency cut-off does not affect the nonlinearity (as in
\eqref{eq:edp2}), but the initial data: consider $v^h$ solution to 
\begin{equation*}
  i\d_t v^h +P_h(D)v^h = \epsilon |v^h|^{2\si}v^h;\quad v^h_{\mid
    t=0}=\Pi_h u_0. 
\end{equation*}
Then in \cite{IgZu-p}, the discrete analogue to $\Pi_h u-v_h$ is
proven to be small. Proposition~\ref{prop:CVsmooth} differs from the
results in \cite{IgZu-p} on several aspects:
\begin{itemize}
\item The context in \cite{IgZu-p} is discrete.
\item Only the low frequency part of $u$, $\Pi_h u$, is shown to be
  well approximated. 
\item The regularity assumption on $u$ may be much weaker. 
\end{itemize}
As mentioned above, the second point is due to the choice of the
model. However, as discussed in Section~\ref{sec:ODE}, controlling the
high frequencies of the initial data must not be expected to ensure a
control of high frequencies of the solution $v^h$ for positive time. 
\smallbreak

The third point is due to the use of Strichartz estimates in
\cite{IgZu-p}. In the next section, we show that in the presence of
dispersion (with $P_h(\xi)=-|\xi|^2$), Proposition~\ref{prop:CVsmooth}
can be adapted to rougher data.

\section{Convergence using dispersive estimates}
\label{sec:strichartz}

We first recall the standard definition.
\begin{definition}\label{def:adm}
 A pair $(p,q)\not =(2,\infty)$ is admissible if $p\ge 2$, $q\ge  2$,
 and 
$$\frac{2}{p}= d\left( \frac{1}{2}-\frac{1}{q}\right).$$
\end{definition}
We shall consider \eqref{eq:edp2} when $P(D)$ is exactly the
Laplacian, and not an approximation as in
Proposition~\ref{prop:CVsmooth}. The reason is that when $P$ is
bounded, then no Strichartz estimate is available, as we now recall. 
Let $S(\cdot)$ be bounded on $H^s$ for all $s\ge 0$. By Sobolev
embedding, for all $(p,q)$ (not necessarily admissible) 
with $2\le q<\infty$, there exists $C>0$ such that for all $u_0\in
H^{d/2-d/q}(\R^d)$, and all finite time interval $I$,
\begin{align*}
    \|S(\cdot)u_0\|_{L^p(I;L^q(\R^d))}&\le
    C\|S(\cdot)u_0\|_{L^p(I;H^{d/2-d/q}(\R^d))}\\
    &\le C\|u_0\|_{L^p(I;H^{d/2-d/q}(\R^d))} =
    C|I|^{1/p}\|u_0\|_{H^{d/2-d/q}(\R^d)}. 
  \end{align*}
If the Fourier multiplier $P$
is bounded, the above estimate cannot be improved, in sharp contrast
with the result provided by Strichartz estimates.
\begin{proposition}[From \cite{CaDisp}]
  Let $d\ge 1$, and $P\in L^\infty(\R^d;\R)$. Denote 
  $S(t)=e^{-itP(D)}$. Suppose that there
  exist an admissible pair $(p,q)$, an index $k\in \R$, a time
  interval $I\ni 0$, $|I|>0$, and  a constant $C>0$ such that
  \begin{equation*}
    \|S(\cdot)u_0\|_{L^p(I;L^q(\R^d))}\le C\|u_0\|_{H^k(\R^d)},\quad
    \forall u_0\in H^k(\R^d). 
  \end{equation*}
Then necessarily, $k\ge 2/p= d/2-d/q$.  
\end{proposition}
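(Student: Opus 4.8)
The plan is to prove the contrapositive in spirit: assume the Strichartz-type estimate holds for some admissible pair $(p,q)$ and index $k$, and derive the necessary lower bound $k\ge 2/p$ by testing the estimate against a suitably rescaled family of initial data and tracking how both sides scale. Since $P$ is merely bounded (not homogeneous like $|\xi|^2$), I cannot use the exact parabolic scaling that works for the free Schr\"odinger group; instead the idea is to exploit a \emph{frequency concentration} at large frequencies where the boundedness of $P$ is most restrictive, or equivalently a low-frequency/scaling argument adapted to the fact that $S(t)=e^{-itP(D)}$ commutes with all Fourier multipliers.

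First I would fix a Schwartz function $\varphi$ with $\widehat\varphi$ supported in an annulus, and consider the family $u_0^\lambda$ defined on the Fourier side by $\widehat{u_0^\lambda}(\xi)=\widehat\varphi(\xi/\lambda)$ for $\lambda\ge 1$, so that $u_0^\lambda$ is frequency-localized near $|\xi|\sim\lambda$. Then $\|u_0^\lambda\|_{H^k}\approx \lambda^{k}\lambda^{d/2}$ (up to constants depending only on $\varphi$). For the left-hand side, $S(t)u_0^\lambda$ has Fourier transform $e^{-itP(\xi)}\widehat\varphi(\xi/\lambda)$; in physical space this is $\lambda^d (e^{-itP(D)}\varphi)(\lambda\,\cdot)$ evaluated through the rescaling, and the key point is that because $\|P\|_{L^\infty}<\infty$, the phase $e^{-itP(\xi)}$ on the support $|\xi|\sim\lambda$ oscillates in $t$ only at the bounded rate $\|P\|_{L^\infty}$, so there is essentially \emph{no} decay: on a fixed time interval $I$ the function $S(t)u_0^\lambda$ behaves, in $L^q_x$ for each fixed $t$, like $\lambda^{d-d/q}\|\varphi\|_{L^q}$ times a harmless bounded factor, and integrating in $t$ over $I$ gives $\|S(\cdot)u_0^\lambda\|_{L^p(I;L^q)}\gtrsim |I|^{1/p}\lambda^{d-d/q}$ — this is the same heuristic already displayed just above the proposition via Sobolev embedding, and I would make it rigorous by choosing $I$ small enough that $|e^{-itP(\xi)}-1|$ is small, so that $S(t)u_0^\lambda$ stays close to $u_0^\lambda$ and its $L^q$ norm is bounded below uniformly.

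Plugging these two asymptotics into the assumed inequality $\|S(\cdot)u_0^\lambda\|_{L^p(I;L^q)}\le C\|u_0^\lambda\|_{H^k}$ gives, for all $\lambda\ge1$,
\begin{equation*}
  c\,|I|^{1/p}\lambda^{d-d/q}\le C\,\lambda^{k+d/2},
\end{equation*}
and letting $\lambda\to\infty$ forces $d-d/q\le k+d/2$, i.e. $k\ge d/2-d/q$. Since $(p,q)$ is admissible, $2/p=d(1/2-1/q)=d/2-d/q$, so this is exactly $k\ge 2/p$, as claimed.

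The main obstacle I anticipate is making the lower bound on the left-hand side fully rigorous and uniform in $\lambda$: I must ensure the time interval $I$ (which is \emph{given}, not chosen) can be used — if $|I|$ is large the phase $e^{-itP(\xi)}$ may oscillate and cause cancellation in the $L^p_t$ integral, so the cleanest route is to restrict to a subinterval $I'\subset I$ with $\|P\|_{L^\infty}|I'|$ small, on which $\RE(e^{-itP(\xi)})\ge 1/2$ uniformly in $\xi$, hence $|S(t)u_0^\lambda(x)|\gtrsim |u_0^\lambda(x)|$ is false pointwise but $\|S(t)u_0^\lambda\|_{L^q}\gtrsim\|u_0^\lambda\|_{L^q}$ holds after a Fourier-support/almost-orthogonality argument (or simply by writing $S(t)u_0^\lambda=u_0^\lambda - (\mathrm{Id}-S(t))u_0^\lambda$ and bounding the error in $L^q$ via Bernstein by $C\|P\|_{L^\infty}|I'|\,\lambda^{d-d/q}\|\varphi\|$, absorbed for $|I'|$ small). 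With that subinterval in hand the restriction of the $L^p(I;L^q)$ norm to $I'$ only loses a constant, and the scaling computation above goes through; everything else is routine Bernstein/Plancherel bookkeeping.
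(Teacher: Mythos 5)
Your argument is correct and complete: the frequency-concentrated data $\widehat{u_0^\lambda}(\xi)=\widehat\varphi(\xi/\lambda)$, the comparison $S(t)u_0^\lambda=u_0^\lambda-(\mathrm{Id}-S(t))u_0^\lambda$ with the error controlled in $L^q$ by Bernstein and $|1-e^{-itP(\xi)}|\le |t|\,\|P\|_{L^\infty}$ on a short subinterval $I'\subset I$ around $0$, and the resulting inequality $c\lambda^{d-d/q}\le C\lambda^{k+d/2}$ as $\lambda\to\infty$ give exactly $k\ge d/2-d/q=2/p$. The paper does not reprove this proposition (it is quoted from \cite{CaDisp}), and your frequency-concentration argument is essentially the one used there, so nothing further is needed.
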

We now state the main result of this section.
\begin{theorem}\label{theo:CVstrichartz}
Let $\si\in \N$ and $T>0$. We assume that
$\chi_h(\xi)=\chi\(h\xi\)$, with $\chi\in \Sch(\R^d)$ even,
  real-valued, $\chi=1$  on $B(0,1)$. Let $u$ solve \eqref{eq:nls}, and
  consider the solution $u^h$ to
  \begin{equation*}
   i\d_t u^h +\Delta u^h = \epsilon \(\Pi_h \(|u^h|^2\)\)^\si
   u^h;\quad u^h_{\mid t=0}=u_0.
  \end{equation*}
$1.$ Suppose that $\si=1$ and $d\le 2$.
If $u \in L^\infty([0,T];L^2)\cap
L^{8/d}([0,T];L^4)$, then   
  \begin{equation*}
     \|u-u^h\|_{L^\infty([0,T];L^2)}\Tend h 0 0 . 
  \end{equation*}
$2.$ Suppose that $\si=1$ and $d=3$.
\begin{itemize}
\item If $u,\nabla u \in
  L^\infty([0,T];L^2)\cap 
L^{8/d}([0,T];L^4)$, then
  \begin{equation*}
     \|u-u^h\|_{L^\infty([0,T];H^1)} \Tend h 0 0 . 
  \end{equation*}
\item If $u \in L^\infty([0,T];H^{s})$, with $s>3/2$,
then
 \begin{equation*}
    \|u-u^h\|_{L^\infty([0,T];L^2)}\lesssim h^{s}\quad \text{and}\quad
   \|u-u^h\|_{L^\infty([0,T];H^{1})}\lesssim h^{s-1}.
  \end{equation*}
\end{itemize}
$3.$ Suppose that $\si\ge 1$ and $d\le 2$. 
 If $u \in L^\infty([0,T];H^{s})$, with $s\ge 1$ and $s>d/2$,
  then 
  \begin{equation*}
    \|u-u^h\|_{L^\infty([0,T];L^2)}\lesssim h^{s}\quad \text{and}\quad
   \|u-u^h\|_{L^\infty([0,T];H^{1})}\Tend h 0 0.
  \end{equation*}
If in addition $s>1$, then 
\begin{equation*}
  \|u-u^h\|_{L^\infty([0,T];H^{1})}\lesssim h^{s-1}. 
\end{equation*}
\end{theorem}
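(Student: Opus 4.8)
The plan is to establish all parts of Theorem~\ref{theo:CVstrichartz} via a unified Strichartz-based bootstrap argument applied to the difference $w^h = u - u^h$, which solves
\begin{equation*}
  i\d_t w^h + \Delta w^h = \epsilon\(\Pi_h(|u^h|^2)\)^\si u^h - \epsilon\(\Pi_h(|u|^2)\)^\si u + \epsilon\(|u|^{2\si} - \(\Pi_h(|u|^2)\)^\si\)u,
\end{equation*}
with $w^h_{\mid t=0}=0$. The last term is a source term $R^h(u)$; since $\chi = 1$ on $B(0,1)$, the Plancherel computation from the proof of Proposition~\ref{prop:CVsmooth} gives $\|(1-\Pi_h)f\|_{L^2}\lesssim h^s\|f\|_{H^s}$ and $\|(1-\Pi_h)f\|_{L^2}\to 0$ in the endpoint cases where only $u\in L^\infty H^s$ with $s$ at the threshold, hence $\|R^h(u)\|_{L^1([0,T];L^2)}$ is either $\lesssim h^s$ or $o(1)$ depending on the hypotheses (and similarly $\|R^h(u)\|_{L^1([0,T];H^1)}\lesssim h^{s-1}$ or $o(1)$ when a gradient is controlled). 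First I would write the Duhamel formula for $w^h$ and apply the standard inhomogeneous Strichartz estimates: for an admissible pair $(p,q)$,
\begin{equation*}
  \|w^h\|_{L^\infty_t L^2 \cap L^p_t L^q} \lesssim \|R^h(u)\|_{L^1_t L^2} + \big\|\(\Pi_h(|u^h|^2)\)^\si u^h - \(\Pi_h(|u|^2)\)^\si u\big\|_{L^{p'}_t L^{q'}},
\end{equation*}
and then estimate the nonlinear difference. The crucial structural point, already flagged in the excerpt, is that one must use $\|K_h\ast f\|_{L^r}\le \|K_h\|_{L^1}\|f\|_{L^r} = \|\widehat\chi\|_{L^1}\|f\|_{L^r}$ (Young with the $L^1$ norm of the kernel, which is scale-invariant) rather than the $L^\infty$-kernel Young inequality, so that $\|\Pi_h g\|_{L^r}\lesssim \|g\|_{L^r}$ uniformly in $h$.

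For part~1 ($\si=1$, $d\le 2$): using the pointwise bound $|a^\si - b^\si|\lesssim(|a|^{\si-1}+|b|^{\si-1})|a-b|$ with $\si=1$, the nonlinear difference is controlled by terms like $\|\,|u^h|^2 - |u|^2\,|\,|u^h|\,\|$ plus $\|\,|u|^2\,|w^h|\,\|$, and after Hölder in space and time (exploiting $u\in L^\infty_t L^2\cap L^{8/d}_t L^4$, and the corresponding a priori bound for $u^h$ obtained from Theorem~\ref{theo:gwp}-type reasoning plus Strichartz), these close on a short time interval with a constant independent of $h$; iterating over $[0,T]$ gives $\|w^h\|_{L^\infty_t L^2}\lesssim \|R^h(u)\|_{L^1_t L^2}\to 0$. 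For part~2 in $d=3$ one works at the $H^1$ level, differentiating the equation once, using that $\nabla$ applied to $(\Pi_h(|u^h|^2))u^h$ distributes with the convolution (the Fourier multiplier commutes with $\nabla$), and that $H^1(\R^3)\hookrightarrow L^6$ together with the $L^{8/3}_t L^4$ control gives the needed Strichartz-admissible Hölder pairing; the rate $h^{s-1}$ in the smooth subcase follows from $\|(1-\Pi_h)f\|_{H^1}\lesssim h^{s-1}\|f\|_{H^s}$. For part~3, $H^s$ with $s>d/2$ is an algebra, so in $d\le 2$ one can close the estimate for $w^h$ directly in $H^1$ (or $L^2$) using the algebra property and uniform $H^s$ bounds on both $u$ and $u^h$, extracting the rate $h^s$ in $L^2$ from the source term and $h^{s-1}$ in $H^1$ when $s>1$, with the borderline $s=1$ yielding only $o(1)$ since $\|(1-\Pi_h)f\|_{H^1}=o(1)$ without a rate.

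The main obstacle I anticipate is obtaining, uniformly in $h$, the a priori Strichartz bounds on $u^h$ needed to run the fixed-point/bootstrap for $w^h$ in the low-regularity cases (parts~1 and the first bullet of part~2). Theorem~\ref{theo:gwp} gives global $L^2$ (or $H^s$) solutions with $h$-independent energy-type control, but the Strichartz norms $L^{8/d}_t L^4$ of $u^h$ on $[0,T]$ must be shown bounded independently of $h$ — this requires carefully redoing the Strichartz fixed point for $u^h$ and checking that the only place $h$ could enter is through $\|\widehat\chi\|_{L^1}$, which is a fixed constant; one then bootstraps $u^h$ against $u$ to propagate the relevant space-time integrability. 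A secondary delicate point is matching the admissible pairs to the dimensions: in $d=2$ the pair $(4,4)$ is admissible and $8/d=4$, in $d=1$ one uses $(8,4)$, and in $d=3$ the pair $(8/3,4)$ is admissible, so the hypotheses $L^{8/d}_t L^4$ are exactly the scale-invariant Strichartz norms at the $L^2$ (resp.\ $\dot H^1$) level, and the Hölder bookkeeping must be done case by case but is otherwise routine.
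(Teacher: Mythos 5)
Your proposal follows essentially the same route as the paper: the same decomposition $w^h=u-u^h$ with source term $R^h(u)$, the same scale-invariant Young inequality $\|\Pi_h f\|_{L^r}\le\|\widehat\chi\|_{L^1}\|f\|_{L^r}$, Strichartz estimates with the same admissible pairs, a bootstrap on $\|u^h\|_{L^{8/d}_tL^4}$ against the corresponding norm of $u$ (rather than an independent fixed point for $u^h$, which is the variant the paper actually runs), and the source-term rates $h^s$, $h^{s-1}$, or $o(1)$ obtained from Plancherel and dominated convergence exactly as in Lemma~\ref{lem:final}. The plan is correct as stated.
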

\begin{remark}
  Suppose $u_0$ sufficiently smooth. If $\epsilon =+1$ (defocusing
  case), the  bounds for $u$ are 
  known in several cases, with $T>0$ arbitrarily large. On the
  contrary, if $\epsilon=-1$ (focusing case), $T$ may have to be
  finite, bounded by a blow-up time. See
  e.g. \cite{CazCourant,GV84}. Typically, if $\si=d=1$, then the
  assumption of the first point is fulfilled for all $T>0$ as soon as $u_0\in
  L^2(\R)$, for $\epsilon\in \{\pm 1\}$, from \cite{TsutsumiL2}, and
  if $\si \ge 1$, $d\le 2$, the assumption of the third point is
  fulfilled for all $T>0$ as soon as $u_0\in 
  H^s(\R^d)$, for $\epsilon=+1$, from \cite{GV84}.
\end{remark}
\begin{proof}
For fixed $h>0$, Theorem~\ref{theo:gwp} shows that $u^h\in
C(\R;H^{k})$, with $k=0,1$ or $s$ according to the cases considered in
the assumptions of the theorem. Of course, the bounds provided by
Theorem~\ref{theo:gwp} blow up as $h\to 0$ if $k>0$. 
 
As in the proof of Proposition~\ref{prop:CVsmooth}, let $w^h=u-u^h$.
The equation satisfied by $w^h$ is simpler than in the proof of
Proposition~\ref{prop:CVsmooth}, since $P_h(D)=\Delta$:
\begin{equation*}
  i\d_t w^h + \Delta w^h  =
\epsilon \(\Pi_h\(|u|^2\)\)^\si u - \epsilon\(\Pi_h\(|u^h|^2\)\)^\si u^h 
+ \epsilon \(|u|^{2\si}-\(\Pi_h\(|u|^2\)\)^\si \)u.
\end{equation*}
Resume the notations $R^h(u)=\epsilon
\(|u|^{2\si}-\(\Pi_h\(|u|^2\)\)^\si \)u$ and  $\Pi_h(f)=K_h\ast f$,
with  
$  K_h(x)=(2\pi)^{-d/2} h^{-d}\widehat \chi(-x/h)$. From young
inequality, we have, for all $q\in [1,\infty]$,
\begin{equation}\label{eq:Youngh}
  \|\Pi_h(f)\|_{L^q}\le \|K_h\|_{L^1}\|f\|_{L^q} \le
  \|\widehat\chi\|_{L^1} \|f\|_{L^q}, 
\end{equation}
an estimate which is uniform in $h>0$. 
Introduce the Lebesgue exponents 
\begin{equation*}
  q=2\si+2\quad ;\quad p=\frac{4\si+4}{d\si}\quad ;\quad
  \theta=\frac{2\si(2\si+2)}{2-(d-2)\si}. 
\end{equation*}
The pair $(p,q)$ is admissible, and
\begin{equation}\label{eq:indices}
  \frac{1}{q'}=\frac{2\si}{q}+\frac{1}{q}\quad ;\quad
  \frac{1}{p'}=\frac{2\si}{\theta} +\frac{1}{p}. 
\end{equation}
For $t>0$, denote $L^j_tL^k=L^j([0,t];L^k(\R^d))$. From Strichartz
estimates (see e.g. \cite{CazCourant}),  
\begin{align*}
  \|w^h\|_{L^p_tL^q\cap L^\infty_tL^2}&\lesssim
  \left\|\(\Pi_h\(|u|^2\)\)^\si u - \(\Pi_h\(|u^h|^2\)\)^\si u^h
  \right\|_{L^{p'}_tL^{q'}} + \left\|R^h(u)\right\|_{L^{p_1'}_tL^{q_1'}}\\
&\lesssim \(\|u\|_{L^\theta_t L^q}^{2\si} +\|u^h\|_{L^\theta_t
  L^q}^{2\si}  \)\|w^h\|_{L^p_tL^q} + \left\|R^h(u)\right\|_{L^{p_1'}_tL^{q_1'}},
\end{align*}
where we have used H\"older inequality and \eqref{eq:Youngh}, and
where $(p_1,q_1)$ is an admissible pair whose value will be
given later. 

If $\si=1$ and $d\le 2$, then $\theta\le p$, and we infer 
\begin{equation*}
 \|w^h\|_{L^p_tL^q\cap L^\infty_tL^2} \lesssim t^{1/\theta-1/p}\(\|u\|_{L^p_t
   L^q}^{2\si} +\|u^h\|_{L^p_t 
  L^q}^{2\si}  \)\|w^h\|_{L^p_tL^q} +
\left\|R^h(u)\right\|_{L^{p_1'}_tL^{q_1'}}. 
\end{equation*}
In the first case of the theorem, we assume $u\in L^p([0,T];L^q)$,
since $p=8/d$ and $q=4$ for $\si=1$. We
use again a 
bootstrap argument: so long as $\|u^h\|_{L^p_t 
  L^q}\le 2 \|u\|_{L^p_t   L^q}$, we divide the interval $[0,T]$ into
finitely many small intervals so the first term of the right hand side
is absorbed by the left hand side (recall that $p$ is finite), and we have
\begin{equation*}
  \|w^h\|_{L^p_tL^q\cap L^\infty_tL^2} \lesssim
  \left\|R^h(u)\right\|_{L^{p_1'}_tL^{q_1'}}. 
\end{equation*}
The bootstrap argument is validated provided that
$\|R^h(u)\|_{L^{p_1'}_TL^{q_1'}} \to 0 $ as $h\to 0$. 

If we have only $\si<\frac{2}{d-2}$, then by Sobolev embedding,
\begin{equation*}
  \|u\|_{L^\theta_t L^q}\le t^{1/\theta}\|u\|_{L^\infty_t H^1}. 
\end{equation*}
In the same way as above,
\begin{align*}
  \|\nabla w^h\|_{L^p_tL^q\cap L^\infty_tL^2}& \lesssim
  \left\|\nabla\(\(\Pi_h\(|u|^2\)\)^\si u - \(\Pi_h\(|u^h|^2\)\)^\si u^h \)
  \right\|_{L^{p'}_tL^{q'}} \\
&\quad + \left\|\nabla R^h(u)\right\|_{L^{p_1'}_tL^{q_1'}}
\end{align*}
The first term of the right hand side is controlled by
\begin{equation}\label{eq:nablaw}
\begin{aligned}
  &\left\|\(\Pi_h\(|u|^2\)\)^\si \nabla u - \(\Pi_h\(|u^h|^2\)\)^\si
    \nabla u^h 
  \right\|_{L^{p'}_tL^{q'}}\\
 +&
\left\| u \nabla\(\Pi_h\( |u|^2\)\)^\si  - u^h\nabla \(\Pi_h\(|u^h|^2\)\)^\si
    \right\|_{L^{p'}_tL^{q'}}
\end{aligned}
\end{equation}
Introducing the factor $\(\Pi_h\(|u|^2\)\)^\si \nabla u^h$, the first
term is estimated by
\begin{align*}
&  \left\|\(\Pi_h\(|u|^2\)\)^\si \nabla w^h 
  \right\|_{L^{p'}_tL^{q'}} + \left\|\(\(\Pi_h\(|u|^2\)\)^\si -
    \(\Pi_h\(|u^h|^2\)\)^\si\) 
    \nabla u^h  \right\|_{L^{p'}_tL^{q'}}\\
&\lesssim \left\|\Pi_h\(|u|^2\)\right\|_{L^{\theta/2}_t L^{q/2}}^\si \|\nabla w^h 
  \|_{L^{p}_tL^{q}}\\
&\quad
  +\(\|u\|_{L^\theta_tL^q}^{2\si-2}+\|u^h\|_{L^\theta_tL^q}^{2\si-2}\)
  \left\||u|^2-|u^h|^2\right\|_{L^{\theta/2}_t L^{q/2}}\|\nabla u^h 
  \|_{L^{p}_tL^{q}}\\
&\lesssim \left\|u\right\|_{L^{\theta}_t L^{q}}^{2\si} \|\nabla w^h 
  \|_{L^{p}_tL^{q}}
  +\(\|u\|_{L^\theta_tL^q}^{2\si-1}+\|u^h\|_{L^\theta_tL^q}^{2\si-1}\)
  \left\|w^h\right\|_{L^{\theta}_t L^{q}}\|\nabla u^h 
  \|_{L^{p}_tL^{q}}\\
&\lesssim t^{2\si/\theta}\left\|u\right\|_{L^{\infty}_t H^1}^{2\si} \|\nabla w^h 
 \|_{L^{p}_tL^{q}}\\
&\quad 
  +t^{2\si/\theta}\(\|u\|_{L^\infty_tH^1}^{2\si-1}+\|u^h\|_{L^\infty_tH^1}^{2\si-1}\)
  \left\|w^h\right\|_{L^\infty_t H^1}\|\nabla u^h 
  \|_{L^{p}_tL^{q}}
\end{align*}
Proceeding similarly for the other term in \eqref{eq:nablaw},
splitting $[0,T]$ into finitely many time intervals where the terms
containing $w^h$ on the right hand side can be absorbed by the left
hand side, and using a bootstrap argument,  we end up with 
\begin{equation*}
  \|w^h\|_{L^p_tW^{1,q}\cap L^\infty_tH^1}\lesssim\left\|
  R^h(u)\right\|_{L^{p_1'}_tW^{1,q_1'}}.
\end{equation*}
Therefore, it suffices to show that for some admissible pair
$(p_1,q_1)$, the source term converges to $0$
in $L^{p_1'}([0,T];L^{q_1'})$ (if $\si=1$ and $d\le 2$) or in
$L^{p_1'}([0,T];W^{1,q_1'})$ (in the other cases), so the bootstrap
argument is completed. In addition, the rate of converge of the source
term, if any, yields a rate of convergence for $w^h$.
The theorem then stems from the following lemma, in which $(p,q)$ is
given by \eqref{eq:indices}. 
\begin{lemma}\label{lem:final}
  Let $T>0$. The source term $R^h(u)$ can be controlled as follows.\\
$1.$ Suppose that $\si=1$ and $d\le 2$.
If $u \in L^\infty([0,T];L^2)\cap
L^{8/d}([0,T];L^4)$, then 
  \begin{equation*}
     \|R^h(u)\|_{L^{p'}([0,T];L^{q'})}\Tend h 0 0 . 
  \end{equation*}
$2.$ Suppose that $\si=1$ and $d=3$.
\begin{itemize}
\item If $u,\nabla u \in
  L^\infty([0,T];L^2)\cap 
L^{8/d}([0,T];L^4)$, then
  \begin{equation*}
     \|R^h(u)\|_{L^{p'}([0,T];W^{1,q'})} \Tend h 0 0 . 
  \end{equation*}
\item If $u \in L^\infty([0,T];H^{s})$, with $s>3/2$,
then
 \begin{equation*}
    \|R^h(u)\|_{L^1([0,T];L^2)}\lesssim h^{s}\quad \text{and}\quad
   \|R^h(u)\|_{L^1([0,T];H^{1})}\lesssim h^{s-1}.
  \end{equation*}
\end{itemize}
$3.$ Suppose that $\si\ge 1$ and $d\le 2$. 
 If $u \in L^\infty([0,T];H^{s})$, with $s\ge 1$ and $s>d/2$,
  then 
  \begin{equation*}
    \|R^h(u)\|_{L^1([0,T];L^2)}\lesssim h^{s}\quad \text{and}\quad
   \|R^h(u)\|_{L^1([0,T];H^{1})}\Tend h 0 0.
  \end{equation*}
If in addition $s>1$, then 
\begin{equation*}
  \|R^h(u)\|_{L^1([0,T];H^{1})}\lesssim h^{s-1}. 
\end{equation*}
\end{lemma}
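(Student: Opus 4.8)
The plan is to derive all four statements from a single algebraic reduction, and then to split the cases according to whether quantitative smoothing of ${\rm Id}-\Pi_h$ is available. First I would use the elementary factorization
\[
  a^\si-b^\si=(a-b)\sum_{j=0}^{\si-1}a^j b^{\si-1-j},\qquad a,b\ge 0,
\]
with $a=|u|^2$, $b=\Pi_h(|u|^2)$, so that
\[
  R^h(u)=\epsilon\,\bigl(({\rm Id}-\Pi_h)(|u|^2)\bigr)\,\Bigl(\sum_{j=0}^{\si-1}|u|^{2j}\bigl(\Pi_h(|u|^2)\bigr)^{\si-1-j}\Bigr)\,u .
\]
Young's inequality $\|\Pi_h f\|_{L^r}=\|K_h\ast f\|_{L^r}\le\|\widehat\chi\|_{L^1}\|f\|_{L^r}$, together with the fact that $\Pi_h$ commutes with derivatives, controls the middle factor and the trailing $u$ in the relevant Lebesgue or Sobolev norm, uniformly in $h$; everything then reduces to $({\rm Id}-\Pi_h)(|u|^2)$. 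For this I would use two elementary facts: (i) the smoothing bound $\|({\rm Id}-\Pi_h)f\|_{H^{\theta-\rho}}\le h^\rho\|f\|_{H^\theta}$ for $0\le\rho\le\theta$, proved exactly as in Proposition~\ref{prop:CVsmooth} (the symbol $1-\chi(h\xi)$ vanishes on $|\xi|\le 1/h$ and is $\le(h|\xi|)^\rho$ elsewhere); and (ii) the strong convergence $\Pi_h g\to g$ in $L^r(\R^d)$ for every $g\in L^r$, $1\le r<\infty$, which holds since $K_h(x)=h^{-d}K(x/h)$ with $K\in L^1$, $\int K=\chi(0)=1$, i.e. $\{K_h\}$ is an approximate identity, together with $\|{\rm Id}-\Pi_h\|_{L^2\to L^2}\le 1$ and $\|{\rm Id}-\Pi_h\|_{L^r\to L^r}\le 1+\|\widehat\chi\|_{L^1}$ uniformly in $h$.

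For the statements carrying an explicit rate — case~3 and the third item of case~2 — I would use that $s>d/2$ makes $H^s$ an algebra, so $|u(t)|^2\in H^s$ with $\||u(t)|^2\|_{H^s}\lesssim\|u(t)\|_{H^s}^2$ and the middle factor times $u$ lies in $H^s$ with norm $\lesssim\|u(t)\|_{H^s}^{2\si-1}$. Applying (i) with $\rho=s$ and $H^s\hookrightarrow L^\infty$ gives $\|R^h(u)(t)\|_{L^2}\lesssim h^s\|u(t)\|_{H^s}^{2\si+1}$, hence the $L^1_tL^2$ bound after integrating on $[0,T]$; applying (i) with $\rho=s-1$ together with the product estimate $\|fg\|_{H^1}\lesssim\|f\|_{H^1}\|g\|_{H^s}$ (valid for $s>d/2$, $s\ge1$; when $d=2,3$ this must be checked by hand, e.g. via $\|f\nabla g\|_{L^2}\lesssim\|f\|_{H^1}\|\nabla g\|_{L^3}\lesssim\|f\|_{H^1}\|g\|_{H^s}$ by Hölder and Sobolev) gives the $h^{s-1}$ bound in $L^1_tH^1$ as soon as $s>1$. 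The remaining rate-free $L^1_tH^1$ statement of case~3 forces $s=1$, hence $d=1$, where $H^1$ is a Banach algebra in $L^\infty$; there I would bound $\|R^h(u)(t)\|_{H^1}\lesssim\|({\rm Id}-\Pi_h)(|u(t)|^2)\|_{H^1}\,\|u(t)\|_{H^1}^{2\si}$, observe that the first factor tends to $0$ for a.e.\ $t$ by (ii) while being dominated by $\|u\|_{L^\infty_tH^1}^2$, and conclude by dominated convergence in $t$.

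For the Strichartz-level statements — case~1 and the first two items of case~2 — there is no rate, and I would argue by Hölder in $x$ followed by dominated convergence in $t$, the dominating functions being exactly the $h$-independent bounds produced along the way. In case~1 ($\si=1$, $d\le2$) one has $q'=4/3$ and $\|R^h(u)(t)\|_{L^{q'}}\le\|({\rm Id}-\Pi_h)(|u(t)|^2)\|_{L^2}\|u(t)\|_{L^4}\le\|u(t)\|_{L^4}^3$; since $u\in L^{8/d}_tL^4$, this lies in $L^{8/(3d)}([0,T])\subset L^{p'}([0,T])$ because $8/(3d)\ge p'=8/(8-d)$ for $d\le2$, and it tends to $0$ for a.e.\ $t$ by (ii), so dominated convergence applies. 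Case~2 ($\si=1$, $d=3$) is the delicate point: the naive splitting lands only in $L^{8/9}_t$, which on $[0,T]$ is \emph{weaker} than the required $L^{p'}_t=L^{8/5}_t$; to repair this I would first upgrade, via $\dot H^1(\R^3)\hookrightarrow L^6$, the hypotheses $u,\nabla u\in L^\infty_tL^2\cap L^{8/3}_tL^4$ to control of $u,\nabla u$ in $L^\infty_t(L^2\cap L^6)\cap L^{8/3}_t(L^4\cap L^\infty)$ and interpolate, then use, with $g_h=({\rm Id}-\Pi_h)(|u|^2)$, $\nabla g_h=({\rm Id}-\Pi_h)(2\RE(\bar u\nabla u))$ and $\|{\rm Id}-\Pi_h\|_{L^r\to L^r}\lesssim1$, the estimates $\|g_h u\|_{L^{4/3}}\lesssim\|u\|_{L^6}^2\|u\|_{L^{12/5}}$, $\|g_h\nabla u\|_{L^{4/3}}\lesssim\|u\|_{L^6}^2\|\nabla u\|_{L^{12/5}}$ and $\|(\nabla g_h)u\|_{L^{4/3}}\lesssim\|u\|_{L^6}\|\nabla u\|_{L^3}\|u\|_{L^4}$, checking that the corresponding time exponents read off from the interpolated Strichartz information all close up to $L^{8/5}_t$. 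Convergence to $0$ again follows from (ii), since $g_h(t)\to0$ in $L^3$ and $\nabla g_h(t)\to0$ in $L^2$ for a.e.\ $t$, the same bounds serving as $h$-independent majorants.

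The main obstacle is the exponent bookkeeping in case~2: one cannot use the hypotheses literally but must first extract the extra integrability hidden in $\nabla u\in L^\infty_tL^2$ through Sobolev embedding, and then verify that the space–time Hölder exponents close in the \emph{dual} Strichartz norm $L^{8/5}_tW^{1,4/3}_x$ rather than in a larger space on $[0,T]$. A secondary, purely technical point is the $H^1$ product inequality used in the rate cases when $d=2,3$; it is standard (Kato–Ponce, or plain Hölder–Sobolev), but must be invoked since $H^1(\R^d)$ is not then an algebra.
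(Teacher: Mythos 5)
Your proof is correct and follows essentially the same route as the paper's: the factorization of $|u|^{2\si}-\(\Pi_h(|u|^2)\)^\si$, the uniform Young bound for $\Pi_h$, the smoothing estimate $\|(1-\Pi_h)f\|_{H^{\theta-\rho}}\le h^\rho\|f\|_{H^\theta}$ combined with the $H^s$-algebra property for the rated cases, and Plancherel/approximate-identity plus dominated convergence in time for the rate-free cases; in case~2 you pick slightly different H\"older exponents (bounding $(1-\Pi_h)(|u|^2)$ in $L^3_x$ against interpolated $L^{12/5}_x$ norms, where the paper uses $L^2_x$ against $L^4_x$ norms of $u$ and $\nabla u$), but the space--time exponents close in $L^{8/5}_tL^{4/3}_x$ either way. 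The one blemish is the intermediate claim that $\nabla u$ is controlled in $L^\infty_t L^6\cap L^{8/3}_tL^\infty$, which does not follow from the hypotheses (it would require $u\in L^\infty_tH^2$); this is harmless, however, since the concrete estimates you actually invoke only use norms of $\nabla u$ obtainable by interpolating $L^\infty_tL^2$ with $L^{8/3}_tL^4$.
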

\begin{proof}[Proof of Lemma~\ref{lem:final}]
For the first case, we use H\"older inequality, in view of
\eqref{eq:indices}:
\begin{align*}
  \|R^h(u)\|_{L^{p'}_TL^{q'}} & = \left\|
    \(1-\Pi_h\)\(|u|^2\)u\right\|_{L^{p'}_TL^{q'}} 
\le \left\|
    \(1-\Pi_h\)\(|u|^2\)\right\|_{L^{\theta/2}_TL^{q/2}}\left\|
    u\right\|_{L^{p}_TL^{q}}.
\end{align*}
We note that for $\si=1$, $q=4$, so by Plancherel Theorem,
\begin{equation*}
  \|\(1-\Pi_h\)(|u|^2)\|_{L^2}^2  = \int_{\R^d}
  \(1-\chi(h\xi)\)^2|\F(|u|^2)(\xi)|^2d\xi\le
  \int_{|\xi|>1/h}|\F(|u|^2) (\xi)|^2d\xi.
\end{equation*}
By assumption, $u \in L^p([0,T];L^4)\subset L^\theta([0,T];L^4)$,
thus $|u|^2 \in L^{\theta/2}([0,T];L^2)$, and by Plancherel
Theorem, $\F(|u|^2)\in L^{\theta/2}([0,T];L^2)$. The first point
of the lemma then stems from the Dominated Convergence Theorem.
\smallbreak

For the first case of the second point, we note that now $\theta>p$,
so the above argument must be adapted,
and we have to estimate the gradient of $R^h(u)$ in the same space as
above. Since
 $ L^\infty([0,T];H^1(\R^3))\subset
L^\theta([0,T];L^4(\R^3))$, the Dominated Convergence Theorem yields 
\begin{equation*}
  \|R^h(u)\|_{L^{p'}_TL^{q'}}\Tend h 0 0. 
\end{equation*}
We now estimate $\nabla R^h(u)$. Write
\begin{align*}
  \|\nabla R^h(u)\|_{L^{p'}_TL^{q'}}&\le 
\left\|(1-\Pi_h)\(|u|^2\)
\right\|_{L^{\theta/2}_T 
  L^2}\|\nabla u\|_{L^p_T L^2} \\
&\quad + \left\| (1-\Pi_h)\nabla\(|u|^2\) \right\|_{L^{(1/\theta+1/p)^{-1}}_T
  L^2}\|u\|_{L^\theta_T L^2}\\
&\lesssim
\left\|(1-\Pi_h)\(|u|^2\)
\right\|_{L^{\infty}_T 
  L^2}\|\nabla u\|_{L^p_T L^2} \\
&\quad + \left\| (1-\Pi_h)\nabla\(|u|^2\) \right\|_{L^{(1/\theta+1/p)^{-1}}_T
  L^2}\|u\|_{L^\infty_T L^2}
\end{align*}
By the same argument as above, 
\begin{equation*}
  \left\|(1-\Pi_h)\(|u|^2\)
\right\|_{L^{\infty}_T 
  L^2}\|\nabla u\|_{L^p_T L^2}\Tend h 0 0 . 
\end{equation*}
We note that $u$ bounded in
$L^\infty([0,T];H^1(\R^3))\subset L^\theta([0,T];L^4(\R^3))$, and
$\nabla u$ bounded in $L^p_TL^4$, so $\nabla |u|^2$ is bounded in
$L^{(1/\theta+1/p)^{-1}}_T  L^2$. Invoking Plancherel Theorem and the
Dominated Convergence Theorem like above, we infer
\begin{equation*}
  \left\| (1-\Pi_h)\nabla\(|u|^2\) \right\|_{L^{(1/\theta+1/p)^{-1}}_T
  L^2}\|u\|_{L^\infty_T L^2}\Tend h 0 0 . 
\end{equation*}
This completes the proof for the first case of the second point. 
\smallbreak

For the remaining cases, 
we use that $H^s(\R^d)$ is 
embedded into $L^\infty(\R^d)$: for fixed $t$,
\begin{align*}
  \|R^h(u)(t)\|_{L^2}&\lesssim
  \(\|u(t)\|_{L^\infty}^{2\si-2}+\|\Pi_h(|u(t)|^2)\|_{L^\infty}^{\si-1}\)
\|(1-\Pi_h)(|u(t)|^2 )\|_{L^2}\|u(t)\|_{L^\infty} \\
&\lesssim \|u(t)\|_{L^\infty}^{2\si-1}\|(1-\Pi_h)(|u(t)|^2 )\|_{L^2}
\lesssim \|u(t)\|_{H^s}^{2\si-1}\|(1-\Pi_h)(|u(t)|^2 )\|_{L^2}.
\end{align*}
Like in the proof of Proposition~\ref{prop:CVsmooth}, we use the estimate
\begin{equation}\label{eq:est}
  \|\(1-\Pi_h\)f\|_{L^2} \le h^{s}\|f\|_{H^{s}},
\end{equation}
and since $H^s(\R^d)$ is an algebra,
\begin{equation*}
     \|R^h(u)\|_{L^\infty([0,T];L^2)}\lesssim
     h^{s}\|u\|_{L^\infty([0,T];H^s)}^{2\si+1}.
  \end{equation*}
To conclude the proof, we estimate $\nabla R^h(u)$ in $L^2(\R^d)$. We
compute
\begin{align*}
  \nabla R^h(u) & = \si |u|^{2\si-2}\(\(1-\Pi_h\)\(\nabla\(|u|^2\)\)\)
  u \\
& \quad + \si\( |u|^{2\si-2}-\(\Pi_h(|u|^2)\)^{\si-1}\) \Pi_h
\(\nabla\(|u|^2\)\) u\\
&\quad + \(|u|^{2\si}-\(\Pi_h\(|u|^2\)\)^{\si}\) \nabla u,
\end{align*}
where the second line is zero is $\si=1$. We estimate successively,
thaks to \eqref{eq:Youngh},
\begin{align*}
  \left\| |u|^{2\si-2}\(\(1-\Pi_h\)\(\nabla\(|u|^2\)\)\)
  u\right\|_{L^2}& \le
\|u\|_{L^\infty}^{2\si-1}\left\|\(1-\Pi_h\)\(|u|^2\)\right\|_{H^1},\\
\left\| \(|u|^{2\si}-\(\Pi_h\(|u|^2\)\)^{\si}\) \nabla
  u\right\|_{L^2}& \le
\|u\|_{L^\infty}^{2\si-2}\left\|\(1-\Pi_h\)\(|u|^2\)\right\|_{L^\infty}\|\nabla
u\|_{L^2} ,
\end{align*}
and, if $\si\ge 2$, 
\begin{align*}
\Big\|\( |u|^{2\si-2}-\(\Pi_h(|u|^2)\)^{\si-1}\) &\Pi_h
\(\nabla\(|u|^2\)\) u\Big\|_{L^2}\\
& \lesssim
\|u\|_{L^\infty}^{2\si-4}\left\|\(1-\Pi_h\)\(|u|^2\)\right\|_{L^2}
\left\|\nabla\(|u|^2\)\right\|_{L^2}\|u\|_{L^\infty}\\
& \lesssim
\|u\|_{L^\infty}^{2\si-2}\left\|\(1-\Pi_h\)\(|u|^2\)\right\|_{L^2}
\left\|\nabla u\right\|_{L^2}.
\end{align*}
Since we have $H^s(\R^d)\hookrightarrow L^\infty(\R^d)$, we end up
with
\begin{equation*}
  \|\nabla R^h(u)\|_{L^2}\lesssim
  \|u\|_{H^s}^{2\si-2}\left\|\(1-\Pi_h\)\(|u|^2\)\right\|_{H^1}. 
\end{equation*}
If $s>1$, \eqref{eq:est} yields, since in addition $s>d/2$,
\begin{equation*}
  \left\|\(1-\Pi_h\)\(|u|^2\)\right\|_{H^1}\lesssim
  h^{s-1}\left\||u|^2\right\|_{H^s} \lesssim h^{s-1}\|u\|_{H^s}^2.
\end{equation*}
If $s=1$ (a case which may occur only if $d=1$, since $s>d/2$), we
write
\begin{align*}
  \left\|\nabla \(1-\Pi_h\)\(|u|^2\)\right\|_{L^2}^2 \le
  \int_{|\xi|>1/h} \left|\F\( \nabla \(|u|^2\)\)(\xi)\right|^2d\xi.
\end{align*}
Now since $ \nabla
\(|u|^2\)=2\RE \bar u \nabla u$ and $u\in H^1(\R)\hookrightarrow
L^\infty(\R)$, $\nabla u\in L^2(\R)$, we
conclude thanks to the Dominated Convergence Theorem.
\end{proof}
This completes the proof of
Theorem~\ref{theo:CVstrichartz}, by choosing $(p_1,q_1)=(p,q)$ or
$(\infty,2)$.  
\end{proof}

\appendix

\section{Physical saturation of  the nonlinearity}
\label{sec:saturation}

Instead of cutting off the high frequencies, one may be tempted by
saturating the nonlinear potential, by replacing $|u|^2$ not by $\Pi(|u|^2)$
but by $f(|u|^2)$ where $f$ is smooth, equal to the identity near the
origin, and constant at infinity. Note also that a saturated
nonlinearity may be in better agreement with physical models (recall
however that \eqref{eq:nls} appears in rather different physical
contexts, such as quantum mechanics, optics, and even fluid mechanics), since
typically the power-like nonlinearity in \eqref{eq:nls} may stem from
a Taylor expansion; see e.g. \cite{Lan11,Sulem}. More precisely, let
$f\in C^\infty(\R;\R)$ such that
\begin{equation}\label{eq:f1}
  f(s) = \left\{
    \begin{aligned}
      s& \text{ if } 0\le s\le 1,\\
      2 & \text{ if } s\ge 2.
    \end{aligned}
\right.
\end{equation}
The analogue of the Fourier multiplier $\Pi_h$ is defined as
\begin{equation*}
  f_h\(|u|^2\) = \frac{1}{h}f\(h|u|^2\),
\end{equation*}
and we replace \eqref{eq:edp2} with
\begin{equation}
  \label{eq:satur}
  i\d_t u^h + P_h(D)u^h = \epsilon \(f_h(|u^h|^2\)^\si u^h,
\end{equation}
so the formal conservation of the $L^2$-norm still holds. 
We could also consider
\begin{equation}\label{eq:f2}
  f_h\(|u|^2\) =\frac{|u|^2}{1+h|u|^2}.
\end{equation}
In both cases, the main aspect to notice is that $f_h$ is bounded and $z\mapsto
f_h(|z|^2)^\si z$ is globally Lipschitzean. We infer the
analogue of Theorem~\ref{theo:gwp}, at least in the $L^2$ case.
\begin{proposition}
    Let $\si\in \N$, $\epsilon\in \{\pm 1\}$, $P:\R^d\to \R$ and
  $f$ given either by \eqref{eq:f1} or by \eqref{eq:f2}. 
  \begin{itemize}
  \item For any $u_0\in L^2(\R^d)$, \eqref{eq:satur} has a unique
    solution $u^h\in C(\R;L^2(\R^d))$ such that $u^h_{\mid t=0}=u_0$. Its
    $L^2$-norm is independent of time. 
\item The flow map $u_0\mapsto u^h$ is uniformly continuous from the
  balls in $L^2(\R^d)$ to $C(\R;L^2(\R^d))$. More precisely, for all
  $u_0,v_0\in L^2(\R^d)$, there exists $C$ depending on $\si$, $h$, 
  $\|u_0\|_{L^2}$ and $\|v_0\|_{L^2}$ such that for all $T>0$, 
  \begin{equation*}
    \|u^h-v^h\|_{L^\infty([-T,T];L^2(\R^d))}\le \|u_0-v_0\|_{L^2(\R^d)}e^{CT},
  \end{equation*}
where $u^h$ and $v^h$ denote the solutions to \eqref{eq:satur} with 
data $u_0$ and $v_0$, respectively. 
  \end{itemize}
\end{proposition}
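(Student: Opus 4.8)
The plan is to capitalize on the structural feature singled out just before the statement: setting $F_h(z):=\(f_h(|z|^2)\)^\si z$, the hypotheses on $f$ (smooth, equal to the identity near $0$, constant near infinity, so that both $f_h$ and $f_h'$ are bounded) make $F_h:\C\to\C$ globally Lipschitzean, $|F_h(z_1)-F_h(z_2)|\le L_h|z_1-z_2|$ for a constant $L_h$ depending only on $\si$, $f$ and $h$; since $F_h(0)=0$ this also gives $|F_h(z)|\le L_h|z|$. Consequently the Nemytskii operator $v\mapsto F_h\(v(\cdot)\)$ is globally Lipschitz from $L^2(\R^d)$ into itself, with the same constant $L_h$. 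This is precisely what makes the present situation easier than Theorem~\ref{theo:gwp}: there the nonlinearity was only Lipschitz on bounded sets, so the conservation of mass had to be invoked in order to iterate the local solution into a global one, whereas here global existence is built into the fixed-point step.

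Concretely, I would let $S(t)=e^{-itP_h(D)}$, which is a unitary group on $L^2(\R^d)$ because the symbol $P_h$ is real-valued, and rewrite \eqref{eq:satur} in Duhamel form,
\begin{equation*}
  u^h(t)=S(t)u_0-i\epsilon\int_0^t S(t-\tau)F_h\(u^h(\tau)\)\,d\tau .
\end{equation*}
Denoting by $\Phi(u)$ the right-hand side, the unitarity of $S(\cdot)$ and the Lipschitz bounds above give, for $t\in[-T,T]$, the estimates $\|\Phi(u)\|_{L^\infty([-T,T];L^2)}\le\|u_0\|_{L^2}+2TL_h\|u\|_{L^\infty([-T,T];L^2)}$ and $\|\Phi(u)-\Phi(v)\|_{L^\infty([-T,T];L^2)}\le 2TL_h\|u-v\|_{L^\infty([-T,T];L^2)}$, so that $\Phi$ is a contraction on the ball of radius $2\|u_0\|_{L^2}$ in $C([-T,T];L^2(\R^d))$ as soon as $2TL_h<1$. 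The decisive point is that this local time $T$ depends only on $L_h$, hence on $\si$, $f$ and $h$, but not on $\|u_0\|_{L^2}$; the solution can therefore be continued by the same step over all of $\R$, yielding a unique $u^h\in C(\R;L^2(\R^d))$. Equivalently, one may simply apply the Cauchy--Lipschitz theorem in the Banach space $L^2(\R^d)$ to the mild formulation, the nonlinear part being globally Lipschitz.

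For the conservation of the $L^2$-norm the formal computation is immediate: from \eqref{eq:satur},
\begin{equation*}
  \frac12\frac{d}{dt}\|u^h(t)\|_{L^2}^2=\RE\<\d_t u^h,u^h\>
  =\RE\(-i\(\epsilon\int_{\R^d}f_h(|u^h|^2)^\si|u^h|^2\,dx-\<P_h(D)u^h,u^h\>\)\)=0,
\end{equation*}
since $\int_{\R^d}f_h(|u^h|^2)^\si|u^h|^2\,dx$ and $\<P_h(D)u^h,u^h\>=\int_{\R^d}P_h(\xi)|\widehat{u^h}(\xi)|^2\,d\xi$ are both real; for $u_0\in L^2(\R^d)$ only, this identity is justified by the usual regularization of the initial datum, exactly as in the proof of Theorem~\ref{theo:gwp} (see \cite{CazCourant}). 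Finally, the uniform continuity — in fact global Lipschitz continuity — of the flow map follows from the very estimate used for the contraction: if $u^h$ and $v^h$ solve \eqref{eq:satur} with data $u_0$ and $v_0$, then $w=u^h-v^h$ satisfies, for $t>0$,
\begin{equation*}
  \|w(t)\|_{L^2}\le\|u_0-v_0\|_{L^2}+L_h\int_0^t\|w(\tau)\|_{L^2}\,d\tau ,
\end{equation*}
and similarly for $t<0$, so Gronwall's lemma yields $\|w(t)\|_{L^2}\le\|u_0-v_0\|_{L^2}e^{L_h|t|}$, which is the asserted bound with $C=L_h$ (depending only on $\si$, $f$ and $h$, not even on $\|u_0\|_{L^2}$ and $\|v_0\|_{L^2}$). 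The only mildly delicate point in the whole argument is the rigorous proof of the $L^2$ conservation for merely $L^2$ data, handled by the standard approximation procedure referenced above; everything else is routine, and strictly simpler than in Theorem~\ref{theo:gwp}.
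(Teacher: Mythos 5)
Your proof is correct and follows exactly the route the paper intends: the paper gives no written proof of this proposition, merely noting that $f_h$ is bounded and $z\mapsto f_h(|z|^2)^\si z$ is globally Lipschitzean and that one then "infers the analogue of Theorem~\ref{theo:gwp}", which is precisely the fixed-point/Gronwall argument you carry out. Your additional observation that the global Lipschitz bound makes the local existence time independent of $\|u_0\|_{L^2}$ (so that mass conservation is not even needed to globalize, and the constant $C$ need not depend on the data) is a valid and slightly sharper reading of the same argument.
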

Introduce
\begin{equation*}
  F_h(s) = \int_0^s f_h(y)^\si dy.
\end{equation*}
We check that the following conservation of energy holds:
\begin{equation*}
  \frac{d}{dt} \( \int_{\R^d} \overline u^h(t,x)P_h(D)u^h(t,x)dx +
  \epsilon \int_{\R^d} F_h\(|u(t,x)|^2\)dx\)=0. 
\end{equation*}
Proving the analogue of Proposition~\ref{prop:CVsmooth}  is easy in
the case \eqref{eq:f1}, since the last 
source term for the error $w^h$ is now 
\begin{equation*}
R^h(u)=  \(|u|^{2\si} - f_h\(|u|^2\)^\si\)u, 
\end{equation*}
and under the assumptions of Proposition~\ref{prop:CVsmooth}, $u\in
L^\infty([0,T]\times \R^d)$, so there exists $h_0>0$ such that for
$0<h\le h_0$,
\begin{equation*}
  |u(t,x)|^{2\si} = f_h\(|u(t,x)|^2\)^\si,\quad \forall (t,x)\in
  [0,T]\times \R^d.
\end{equation*}
Therefore, this source term simply vanishes for $h$ sufficiently
small. In the case \eqref{eq:f2}, we can use the relation
\begin{equation}\label{eq:sourcesatur}
 |R_h(u)|=   \left\lvert\( |u|^{2\si} -
   f_h\(|u|^2\)^\si\)u\right\rvert\lesssim
 \frac{h|u|^2}{1+h|u|^2}|u|^{2\si+1}, 
\end{equation}
and Schauder lemma to get a source
term which is $\O(h)$ in $H^s(\R^d)$, for $s>d/2$. 
\begin{proposition}\label{prop:CVsmooth2}
 Let $\si\in \N$. We assume that $P$ is such that there exist
 $\alpha,\beta\ge 0$ with $P_h(\xi) = -|\xi|^2 + 
    \O\(h^\alpha\<\xi\>^\beta\)$.
Denote by $u^h$ the solution to \eqref{eq:satur} with
 $P_h$ and $f_h$, such that $u^h_{\mid t=0}=u_{\mid t=0}$. Suppose
 that the solution to \eqref{eq:nls} satisfies $u \in
 L^\infty([0,T];H^{s+\beta})$, for some  
 $s>d/2$. 
 \begin{itemize}
 \item In the case \eqref{eq:f1},
 $    \|u-u^h\|_{L^\infty([0,T];H^{s})}\lesssim h^{\alpha}.$
\item In the case \eqref{eq:f2},
$     \|u-u^h\|_{L^\infty([0,T];H^{s})}\lesssim h^{\min(\alpha,1)}.$
 \end{itemize}
\end{proposition}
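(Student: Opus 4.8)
The plan is to mimic the proof of Proposition~\ref{prop:CVsmooth} essentially verbatim, with the Fourier multiplier error term replaced by the physical saturation error term $R^h(u)=\(|u|^{2\si}-f_h(|u|^2)^\si\)u$. Setting $w^h=u-u^h$, the equation for $w^h$ reads
\begin{align*}
  i\d_t w^h + P_h(D)w^h &= \epsilon\( f_h(|u^h|^2)^\si u^h - f_h(|u^h|^2)^\si u^h\big|_{\text{replace }u^h\text{ by }u} \) + \(P_h(D)-\Delta\)u \\
  &\quad + \epsilon\( |u|^{2\si}-f_h(|u|^2)^\si\)u,
\end{align*}
or more cleanly, the difference of nonlinear terms $\epsilon\(f_h(|u|^2)^\si u - f_h(|u^h|^2)^\si u^h\)$ plus the source $\(P_h(D)-\Delta\)u+R^h(u)$. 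First I would record that $z\mapsto f_h(|z|^2)^\si z$ is Lipschitz on $H^s(\R^d)$, $s>d/2$, uniformly on bounded sets, with constant independent of $h$: this follows because $f$ is fixed smooth with $f,f'$ bounded, because $H^s$ is an algebra for $s>d/2$, and because the chain-rule (Moser/Schauder) estimates for $g\mapsto f_h(g)$ on $H^s$ only involve $\|f'\|_{L^\infty}$, $\|f''\|_{L^\infty},\dots$ and $\|g\|_{H^s}$, all of which are $h$-independent once we note $f_h(|u|^2)=\frac1h f(h|u|^2)$ has derivatives $\partial_s f_h = f'(h\cdot)$, etc., so no negative powers of $h$ appear. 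This gives, for the nonlinear difference,
\begin{equation*}
  \left\| f_h(|u|^2)^\si u - f_h(|u^h|^2)^\si u^h \right\|_{H^s} \lesssim C\(\|u\|_{H^s},\|u^h\|_{H^s}\)\|w^h\|_{H^s},
\end{equation*}
with $C$ independent of $h$.

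Next I would bound the two source terms in $H^s$. For $\(P_h(D)-\Delta\)u$, the hypothesis $P_h(\xi)=-|\xi|^2+\O(h^\alpha\<\xi\>^\beta)$ gives, exactly as in Proposition~\ref{prop:CVsmooth}, $\|(P_h(D)-\Delta)u\|_{H^s}\lesssim h^\alpha\|u\|_{H^{s+\beta}}$. For $R^h(u)$ the two cases diverge. In case \eqref{eq:f1}: since $s>d/2$ and $u\in L^\infty([0,T];H^{s})\hookrightarrow L^\infty([0,T]\times\R^d)$, there is $h_0>0$ with $h|u(t,x)|^2\le 1$ for all $h\le h_0$, whence $f_h(|u|^2)=|u|^2$ pointwise and $R^h(u)\equiv 0$ for $h\le h_0$; thus only the $\(P_h(D)-\Delta\)u$ term contributes, giving the rate $h^\alpha$. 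In case \eqref{eq:f2}: using \eqref{eq:sourcesatur}, $|R^h(u)|\lesssim h|u|^{2\si+3}/(1+h|u|^2)\le h|u|^{2\si+3}$ pointwise, but one needs an $H^s$ bound, so I would instead write $R^h(u)=\(|u|^{2\si}-f_h(|u|^2)^\si\)u$ and apply the Schauder/Moser estimate to the smooth map $g\mapsto g^\si - f_h(g)^\si$ evaluated at $g=|u|^2$: since $r^\si - f_h(r)^\si$ vanishes to order one in $h$ (its $s$-derivative in $h$ at $h=0$ is controlled), with all $H^s$-chain-rule constants $h$-independent, one gets $\|R^h(u)\|_{H^s}\lesssim h\,C(\|u\|_{H^s})$, hence the combined rate $h^{\min(\alpha,1)}$.

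Finally I would close with the same bootstrap/Gronwall argument as in Proposition~\ref{prop:CVsmooth}. Since $S_h(t)=e^{-itP_h(D)}$ is unitary on $H^s$, Duhamel and the three estimates above give, so long as $\|u^h\|_{L^\infty([0,t];H^s)}\le 1+\|u\|_{L^\infty([0,T];H^s)}$,
\begin{equation*}
  \|w^h(t)\|_{H^s} \le \int_0^t C\|w^h(\tau)\|_{H^s}\,d\tau + C\,t\,h^{\delta},\qquad \delta=\alpha \text{ or }\min(\alpha,1),
\end{equation*}
so Gronwall yields $\|w^h\|_{L^\infty([0,T];H^s)}\lesssim h^\delta$, and for $h$ small enough the bootstrap hypothesis on $u^h$ is maintained on all of $[0,T]$. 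The main obstacle — the only place any real care is needed — is the uniform-in-$h$ nonlinear estimates: one must check that composing with $f_h(s)=\frac1h f(hs)$ and forming $f_h(\cdot)^\si$ never introduces a factor of $h^{-1}$ in $H^s$-norm bounds. This is where $s>d/2$ (algebra property) and the scaling-invariance of the relevant derivatives of $f_h$ are used; once this is granted, everything else is a routine transcription of the earlier proof.
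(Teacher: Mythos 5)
Your proposal is correct and follows essentially the same route as the paper: transcribe the proof of Proposition~\ref{prop:CVsmooth}, observe that in case \eqref{eq:f1} the source term $R^h(u)$ vanishes identically for $h$ small because $u\in L^\infty([0,T]\times\R^d)$, and in case \eqref{eq:f2} use \eqref{eq:sourcesatur} together with Schauder/Moser estimates to obtain an $\O(h)$ source in $H^s$, then close with the same Gronwall/bootstrap. Your explicit check that composing with $f_h(s)=\frac1h f(hs)$ introduces no negative powers of $h$ in the uniform $H^s$ Lipschitz estimates is precisely the point the paper leaves implicit, and it is correct.
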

In the case \eqref{eq:f1}, proving an analogue to
Theorem~\ref{theo:CVstrichartz} seems to be more delicate though, and
we choose not to investigate this aspect here. On the other hand, in
the case \eqref{eq:f2}, using the estimate \eqref{eq:sourcesatur},
Strichartz estimates and H\"older inequalities with the ``standard''
Lebesgue exponents (in the same fashion as in the proof of
Theorem~\ref{theo:CVstrichartz}, see e.g. \cite{CazCourant}),  we
have, with steps similar to those presented in the proof of Theorem~\ref{theo:CVstrichartz}:
\begin{theorem}
  Let $\si\in \N$ and $T>0$. Let $u$ solve \eqref{eq:nls}, and
  consider $u^h$ solution to
  \begin{equation*}
   i\d_t u^h +\Delta u^h = \epsilon \(\frac{|u^h|^2}{1+h|u^h|^2}\)^\si
   u^h;\quad u^h_{\mid t=0}=u_0.
  \end{equation*}
$1.$ If $\si\le 2/d$,  and $u \in
L^\infty([0,T];L^2)\cap 
L^{(4\si+4)/d\si}([0,T];L^{2\si+2})$, then 
\begin{equation*}
     \|u-u^h\|_{L^\infty([0,T];L^2)}\Tend h 0 0 . 
  \end{equation*}
$2.$ Suppose that $\si=1$ and $d=3$.
\begin{itemize}
\item If $u,\nabla u \in
  L^\infty([0,T];L^2)\cap 
L^{8/d}([0,T];L^4)$, then
  \begin{equation*}
     \|u-u^h\|_{L^\infty([0,T];H^1)} \Tend h 0 0 . 
  \end{equation*}
\item If $u \in L^\infty([0,T];H^{s})$, with $s>3/2$,
then
 \begin{equation*}
    \|u-u^h\|_{L^\infty([0,T];H^{1})}\lesssim h.
  \end{equation*}
\end{itemize}
$3.$ Suppose that $\si\ge 1$ and $d\le 2$. 
 If $u \in L^\infty([0,T];H^{s})$, with $s\ge 1$ and $s>d/2$,
  then 
  \begin{equation*}
    \|u-u^h\|_{L^\infty([0,T];H^{1})}\lesssim h.
  \end{equation*}
\end{theorem}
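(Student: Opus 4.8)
The plan is to follow, essentially verbatim, the scheme of the proof of Theorem~\ref{theo:CVstrichartz}, the only change being the nature of the source term. First I would set $w^h=u-u^h$; writing $f_h(s)=s/(1+hs)$, it solves $w^h_{\mid t=0}=0$ and
\begin{equation*}
  i\d_t w^h + \Delta w^h = \epsilon\(f_h\(|u|^2\)^\si u - f_h\(|u^h|^2\)^\si u^h\) + R^h(u),\qquad
  R^h(u)=\epsilon\(|u|^{2\si}-f_h\(|u|^2\)^\si\)u .
\end{equation*}
The key observation is that $f_h$ is bounded, with $0\le f_h(s)\le s$ and $|f_h(s)-f_h(s')|\le|s-s'|$ (its derivative is $(1+hs)^{-2}\le 1$), so that $z\mapsto f_h(|z|^2)^\si z$ satisfies, uniformly in $h\in(0,1]$, exactly the same pointwise bounds as the genuine nonlinearity $z\mapsto|z|^{2\si}z$, in particular
\begin{equation*}
  \left\lvert f_h\(|u|^2\)^\si u - f_h\(|u^h|^2\)^\si u^h\right\rvert\lesssim\(|u|^{2\si}+|u^h|^{2\si}\)|u-u^h|,
\end{equation*}
and likewise after one differentiation. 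Hence the Strichartz inequalities and the bootstrap argument of Theorem~\ref{theo:CVstrichartz} apply word for word: with $(p,q)$ as in \eqref{eq:indices} they first yield the Strichartz bounds on $u^h$ needed to run the bootstrap, uniformly in $h$, and then reduce each of the assertions $1$--$3$ to proving that $R^h(u)$ --- together with $\nabla R^h(u)$ whenever an $H^1$ estimate is claimed --- tends to $0$, resp.\ is $\O(h)$, in $L^{p_1'}([0,T];L^{q_1'})$ or $L^{p_1'}([0,T];W^{1,q_1'})$ for an admissible pair $(p_1,q_1)\in\{(p,q),(\infty,2)\}$.

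Next I would control the source term using \eqref{eq:sourcesatur}, i.e.
\begin{equation*}
  |R^h(u)|\lesssim\frac{h|u|^2}{1+h|u|^2}\,|u|^{2\si+1}\le\min\(h|u|^2,1\)|u|^{2\si+1}\le h\,|u|^{2\si+3}.
\end{equation*}
For point~$1$, H\"older's inequality with the exponents \eqref{eq:indices} gives
\begin{equation*}
  \|R^h(u)\|_{L^{p'}_TL^{q'}}\le\left\lVert|u|^{2\si}-f_h\(|u|^2\)^\si\right\rVert_{L^{\theta/2}_TL^{q/2}}\|u\|_{L^p_TL^q};
\end{equation*}
since $\si\le 2/d$ forces $\theta\le p$ (so $L^p([0,T];L^q)\hookrightarrow L^\theta([0,T];L^q)$), the hypothesis puts $|u|^{2\si}$ in $L^{\theta/2}([0,T];L^{q/2})$, and as $|u|^{2\si}-f_h(|u|^2)^\si\to 0$ a.e.\ and is dominated by $|u|^{2\si}$, the Dominated Convergence Theorem yields $\|R^h(u)\|_{L^{p'}_TL^{q'}}\to 0$. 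For the cases assuming $u\in L^\infty([0,T];H^s)$ with $s>d/2$, I would use $H^s\hookrightarrow L^\infty$ and the algebra property of $H^s$: writing $|u|^2-f_h(|u|^2)=h|u|^4/(1+h|u|^2)=:\phi_h(|u|^2)$, with $0\le\phi_h(y)\le hy^2$ and $0\le\phi_h'(y)\le 3hy$, developing $|u|^{2\si}-f_h(|u|^2)^\si=\phi_h(|u|^2)\sum_{j=0}^{\si-1}|u|^{2j}f_h(|u|^2)^{\si-1-j}$, and differentiating $R^h(u)$ at most once while estimating every factor in $L^\infty$ or $L^2$, one extracts a factor $h$ from $\phi_h$ (or $\phi_h'$) and gets
\begin{equation*}
  \|R^h(u)\|_{L^1([0,T];L^2)}+\|R^h(u)\|_{L^1([0,T];H^1)}\lesssim h\,T\,\|u\|_{L^\infty_TH^s}^{2\si+3},
\end{equation*}
whence, after Gronwall's lemma and the bootstrap, the $\O(h)$ bounds of point~$3$ and of the second bullet of point~$2$.

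The hard part will be the first bullet of point~$2$ ($\si=1$, $d=3$), where $u$ lies only in $L^\infty_TL^2\cap L^{8/3}_TL^4$ (with $\nabla u$ in the same space) and $H^1(\R^3)\not\hookrightarrow L^\infty$, so $|u|$ cannot be placed in $L^\infty$. There, mimicking the delicate case of Lemma~\ref{lem:final}, I would write $R^h(u)=\phi_h(|u|^2)u$, split $\nabla R^h(u)=\phi_h'(|u|^2)\nabla\(|u|^2\)u+\phi_h(|u|^2)\nabla u$, bound the multipliers $\phi_h(|u|^2)/|u|^2\le\min(h|u|^2,1)$ and $\phi_h'(|u|^2)\le\min(3h|u|^2,3)$ --- both bounded by a constant and converging to $0$ a.e.\ as $h\to 0$ --- by $1$, distribute the remaining powers of $|u|$ and $|\nabla u|$ over the admissible pair $(p,q)=(8/3,4)$ and the auxiliary exponent $\theta$ via $L^\infty_TH^1(\R^3)\hookrightarrow L^\theta_TL^4$ and $\nabla u\in L^{8/3}_TL^4$, and conclude by the Dominated Convergence Theorem that $\|R^h(u)\|_{L^{p'}_TW^{1,q'}}\to 0$; the $L^2$ convergence of point~$1$ is recovered along the way. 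I expect this bookkeeping, rather than any new idea, to be the only real obstacle: the rest is a transcription of the proof of Theorem~\ref{theo:CVstrichartz} with $\Pi_h(|u|^2)$ replaced by $f_h(|u|^2)$ and $(1-\Pi_h)(|u|^2)$ by $\phi_h(|u|^2)$.
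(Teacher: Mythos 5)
Your proposal is correct and takes essentially the same route as the paper: the paper's own proof of this theorem consists precisely of the reduction to the scheme of Theorem~\ref{theo:CVstrichartz} with the source term controlled via \eqref{eq:sourcesatur}, and your uniform-in-$h$ Lipschitz bounds for $z\mapsto f_h(|z|^2)^\si z$ together with the pointwise bounds on $\phi_h=\mathrm{Id}-f_h$ and $\phi_h'$ supply exactly the details the paper leaves implicit. The only cosmetic slip is in point~1, where the $2\si$-homogeneous factor should be placed in $L^{\theta/(2\si)}_TL^{q/(2\si)}$ rather than $L^{\theta/2}_TL^{q/2}$ (a distinction that matters only for $d=1$, $\si=2$).
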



\begin{thebibliography}{10}

\bibitem{ACMA}
{\sc T.~Alazard and R.~Carles}, {\em Loss of regularity for super-critical
  nonlinear {S}chr\"odinger equations}, Math. Ann., 343 (2009), pp.~397--420.

\bibitem{BeTa05}
{\sc I.~Bejenaru and T.~Tao}, {\em Sharp well-posedness and ill-posedness
  results for a quadratic nonlinear {S}chr{\"o}dinger equation}, J. Funct.
  Anal., 233 (2005), pp.~228--259.

\bibitem{Bu11}
{\sc N.~Burq}, {\em Large-time dynamics for the one-dimensional {S}chr\"odinger
  equation}, Proc. Roy. Soc. Edinburgh Sect. A, 141 (2011), pp.~227--251.

\bibitem{BGTENS}
{\sc N.~Burq, P.~G\'erard, and N.~Tzvetkov}, {\em Multilinear eigenfunction
  estimates and global existence for the three dimensional nonlinear
  {S}chr\"odinger equations}, Ann. Sci. \'Ecole Norm. Sup. (4), 38 (2005),
  pp.~255--301.

\bibitem{BTT-p}
{\sc N.~Burq, L.~Thomann, and N.~Tzvetkov}, {\em Long time dynamics for the one
  dimensional non linear {S}chr\"odinger equation}.
\newblock Archived at \url{http://arxiv.org/abs/1002.4054}

\bibitem{BT08a}
{\sc N.~Burq and N.~Tzvetkov}, {\em Random data {C}auchy theory for
  supercritical wave equations. {I}. {L}ocal theory}, Invent. Math., 173
  (2008), pp.~449--475.

\bibitem{BT08b}
\leavevmode\vrule height 2pt depth -1.6pt width 23pt, {\em Random data {C}auchy
  theory for supercritical wave equations. {II}. {A} global existence result},
  Invent. Math., 173 (2008), pp.~477--496.

\bibitem{CaARMA}
{\sc R.~Carles}, {\em Geometric optics and instability for semi-classical
  {S}chr\"odinger equations}, Arch. Ration. Mech. Anal., 183 (2007),
  pp.~525--553.

\bibitem{CaDisp}
\leavevmode\vrule height 2pt depth -1.6pt width 23pt, {\em On {S}chr{\"o}dinger
  equations with modified dispersion}, Dyn. Partial Differ. Equ., 8 (2011),
  pp.~173--184.

\bibitem{CDS-p}
{\sc R.~Carles, E.~Dumas, and C.~Sparber}, {\em Geometric optics and
  instability for {NLS} and {D}avey-{S}tewartson models}, J. Eur. Math. Soc.
  (JEMS),  (2012).

\bibitem{CazCourant}
{\sc T.~Cazenave}, {\em Semilinear {S}chr\"odinger equations}, vol.~10 of
  Courant Lecture Notes in Mathematics, New York University Courant Institute
  of Mathematical Sciences, New York, 2003.

\bibitem{CFH11}
{\sc T.~Cazenave, D.~Fang, and Z.~Han}, {\em Continuous dependence for {NLS} in
  fractional order spaces}, Ann. Inst. H. Poincar\'e Anal. Non Lin\'eaire, 28
  (2011), pp.~135--147.

\bibitem{CW90}
{\sc T.~Cazenave and F.~Weissler}, {\em The {C}auchy problem for the critical
  nonlinear {S}chr\"odinger equation in ${H}^s$}, Nonlinear Anal. TMA, 14
  (1990), pp.~807--836.

\bibitem{CCT2}
{\sc M.~Christ, J.~Colliander, and T.~Tao}, {\em Ill-posedness for nonlinear
  {S}chr\"odinger and wave equations}.
\newblock \url{http://arxiv.org/abs/math.AP/0311048}.

\bibitem{CKSTT02}
{\sc J.~Colliander, M.~Keel, G.~Staffilani, H.~Takaoka, and T.~Tao}, {\em
  Almost conservation laws and global rough solutions to a nonlinear
  {S}chr\"odinger equation}, Math. Res. Lett., 9 (2002), pp.~659--682.

\bibitem{DeFa09}
{\sc A.~Debussche and E.~Faou}, {\em Modified energy for split-step methods
  applied to the linear {S}chr\"odinger equation}, SIAM J. Numer. Anal., 47
  (2009), pp.~3705--3719.

\bibitem{GV84}
{\sc J.~Ginibre and G.~Velo}, {\em On the global {C}auchy problem for some
  nonlinear {S}chr\"odinger equations}, Ann. Inst. H. Poincar\'e Anal. Non
  Lin\'eaire, 1 (1984), pp.~309--323.

\bibitem{IgZu-p}
{\sc L.~I. Ignat and E.~Zuazua}, {\em Convergence rates for dispersive
  approximation schemes to nonlinear {S}chr\"odinger equations}, J. Math. Pures
  Appl.,  (2012).
\newblock \url{http://dx.doi.org/10.1016/j.matpur.2012.01.001}.

\bibitem{Kato87}
{\sc T.~Kato}, {\em On nonlinear {S}chr\"odinger equations}, Ann. IHP (Phys.
  Th\'eor.), 46 (1987), pp.~113--129.

\bibitem{KPV01}
{\sc C.~Kenig, G.~Ponce, and L.~Vega}, {\em On the ill-posedness of some
  canonical dispersive equations}, Duke Math. J., 106 (2001), pp.~617--633.

\bibitem{Lan11}
{\sc D.~Lannes}, {\em High-frequency nonlinear optics: from the nonlinear
  {S}chr\"odinger approximation to ultrashort-pulses equations}, Proc. Roy.
  Soc. Edinburgh Sect. A, 141 (2011), pp.~253--286.

\bibitem{Lebeau01}
{\sc G.~Lebeau}, {\em Non linear optic and supercritical wave equation}, Bull.
  Soc. Roy. Sci. Li\`ege, 70 (2001), pp.~267--306 (2002).
\newblock Hommage \`a Pascal Laubin.

\bibitem{Lebeau05}
\leavevmode\vrule height 2pt depth -1.6pt width 23pt, {\em Perte de
  r\'egularit\'e pour les \'equations d'ondes sur-critiques}, Bull. Soc. Math.
  France, 133 (2005), pp.~145--157.

\bibitem{MetivierBourbaki}
{\sc G.~M{\'e}tivier}, {\em Exemples d'instabilit\'es pour des \'equations
  d'ondes non lin\'eaires (d'apr\`es {G}.~{L}ebeau)}, Ast\'erisque,  (2004),
  pp.~vii, 63--75.

\bibitem{Sulem}
{\sc C.~Sulem and P.-L. Sulem}, {\em The nonlinear {S}chr\"odinger equation,
  Self-focusing and wave collapse}, Springer-Verlag, New York, 1999.

\bibitem{ThomannAnalytic}
{\sc L.~Thomann}, {\em Instabilities for supercritical {S}chr\"odinger
  equations in analytic manifolds}, J. Differential Equations, 245 (2008),
  pp.~249--280.

\bibitem{TsutsumiL2}
{\sc Y.~Tsutsumi}, {\em {$L^2$}--solutions for nonlinear {S}chr\"odinger
  equations and nonlinear groups}, Funkcial. Ekvac., 30 (1987), pp.~115--125.

\end{thebibliography}
 \end{document}